\newcommand{\TT}{\mathbb{T}}
\newcommand{\PP}{\mathbb{P}}
\newcommand{\NN}{\mathbb{N}}
\newcommand{\xx}{\mathbf{x}}
\newcommand{\reg}{\operatorname{reg}}
\newcommand{\set}[1]{\left \{ #1 \right \}}
\newcommand{\rt}{\longrightarrow}
\newtheorem{theorem}{Theorem}[section]
\newtheorem{lemma}[theorem]{Lemma}
\newtheorem{prop}[theorem]{Proposition}
\newtheorem*{claim*}{Claim}
\theoremstyle{definition}
\begin{document}

\title[Regularity of the vanishing ideal over a parallel composition
of paths]{Regularity of the vanishing ideal over a \\parallel composition
of paths}
\author[A.~Macchia]{Antonio Macchia}
\author[J.~Neves]{Jorge Neves}
\address{CMUC, Department of Mathematics, University of Coimbra,
Apartado 3008, EC Santa Cruz
3001--501 Coimbra, Portugal.
}
\email{macchia.antonello@gmail.com}
\email{neves@mat.uc.pt}

\author[M.~Vaz Pinto]{\\Maria Vaz Pinto}
\address{
Departamento de Matem\'atica\\
Instituto Superior T\'ecnico\\
Universidade de Lisboa\\
Avenida Rovisco Pais, 1\\
1049-001 Lisboa, Portugal
}
\email{vazpinto@math.ist.utl.pt}

\author[R.~H.~Villarreal]{Rafael H.~Villarreal}
\address{
Departamento de
Matem\'aticas\\
Centro de Investigaci\'on y de Estudios
Avanzados del
IPN\\
Apartado Postal
14--740 \\
07000 Mexico City, D.F.
}
\email{vila@math.cinvestav.mx}

\thanks{This work was partially supported by the Centre for Mathematics of the
University of Coimbra -- UID/MAT/00324/2013, funded by the Portuguese
Government through FCT/MCTES and co-funded by the European Regional
Development Fund through the Partnership Agreement PT2020. The third author was partially supported by the
Center for Mathematical Analysis, Geometry and Dynamical Systems
of Instituto Superior Técnico, Universidade de Lisboa,
funded by FCT/Portugal through UID/MAT/04459/2013.}

\begin{abstract}
Let $G$ be a graph obtained by taking $r\geq 2$ paths and identifying 
all first vertices and identifying all the last vertices.
We compute the Castelnuovo--Mumford regularity of the quotient $S/I(X)$, where 
$S$ is the polynomial ring on the edges of $G$ and $I(X)$ is the vanishing ideal of 
the projective toric subset parameterized by $G$. The case we consider is the
first case where the regularity was unknown, following earlier computations (by several authors) 
of the regularity when $G$ is a tree, cycle, complete graph or 
complete bipartite graph, but specially in light of the
reduction of the computation of the regularity in the bipartite case 
to the computation of the regularity of 
the blocks of $G$. We also prove new inequalities relating the Castelnuovo--Mumford regularity 
of $S/I(X)$ with the combinatorial structure of $G$, for a general graph.  
\end{abstract}

\keywords{Castelnuovo--Mumford regularity, vanishing ideals, parallel composition of paths}

\subjclass[2010]{13F20 (primary); 14G15, 11T55.}

\maketitle

\section{Introduction}
\label{sec: intro}

Let $K$ be a field and denote by $S$ the polynomial ring $K[t_1,\dots,t_s]$ with the standard grading.
If $M$ is a finitely generated graded $S$-module and
\begin{equation}\label{eq: minimal graded free resolution}
0\to F_c\stackrel{\phi_c}{\rt} \cdots {\rt} F_2 \stackrel{\phi_2}{\rt} F_1 \stackrel{\phi_1}{\rt} F_0 \rt M
\end{equation}
is a minimal graded free resolution, the Castelnuovo--Mumford regularity of $M$ is the integer:
$$
\reg M = \max_{i,j} \set{j-i \mid b_{ij} \neq 0},
$$
where $b_{ij}$ are the graded Betti numbers of $M$, defined by $F_i \cong \oplus_{j\in \mathbb{Z}} S(-j)^{b_{ij}}$.
The regularity of $M$ reflects the size of the degrees of the entries of the matrices in (\ref{eq: minimal graded free resolution}),
and therefore, in a certain sense, the complexity of $M$ as a graded module. In the case when $M=S/I$, with $I$ a
Cohen--Macaulay homogeneous ideal, we know that (cf.~\cite[Proposition~4.2.3]{monalg}):
\begin{equation}\label{eq: relation of reg with Hilbert Series}
\reg S/I= \max_j \set{j-c\mid b_{cj}\not = 0} = \deg F_{S/I}(t) + \dim S/I,
\end{equation}
where $F_{S/I}(t)$ is the Hilbert Series of the module $S/I$ in rational function form.
\smallskip

Recently, many authors have studied the Castelnuovo-Mumford regularity of ideals associated to some combinatorial structure. 
For square free monomial ideals generated in degree $2$, so-called \emph{edge ideals} as their
generators correspond to the edges of a graph (cf.~\cite[Chapter~6]{monalg}),
the regularity can be bounded using the induced matching number of the associated graph 
(cf.~\cite{HaVT08}, \cite[Lemma 2.2]{Ka06} and \cite{Wo14}). 
For chordal graphs, it has been shown that the regularity actually coincides with this graph invariant 
(see \cite[Corollary 6.9]{HaVT08}). Several families of binomial ideals associated
with a combinatorial structure have also been studied. The class of \emph{toric ideals},
i.e., the ideal of relations of the edge subring of a graph, whose generators correspond to 
\emph{even closed walks} on the graph (cf.~\cite[Chapter~8]{monalg}), is one such example. 
For a complete graph $\mathcal{K}_n$, the regularity of its edge subring is equal to $\lfloor n/2 \rfloor$, 
while for a complete bipartite graph $\mathcal{K}_{a,b}$, this invariant coincides with $\min \{a,b\}-1$ (cf.~\cite{monalg}).
Lower and upper bounds for the regularity of 
toric ideals, in terms of the structure of the underlying graph, have recently been established (cf.~\cite{BiO'KVT}).
Another class of binomial ideals which has been extensively studied in recent times is the class of \emph{binomial edge ideals}. 
These ideals are generated by the maximal minors of a $2 \times s$ 
generic matrix, whose column indices correspond to the edges of a graph. The regularity of these ideals 
can also be expressed and bounded in terms of graph-theoretic invariants (cf.~\cite{EnZa15,KiSM16,MaMu13}).
\smallskip

For the purposes of this work, $K$ will be a finite field of cardinality $q$.
In the rest of the paper all \emph{graphs} will be undirected and without loops; multiple edges
are allowed. The vertex set of a graph $G$ will be denoted by $V_G$ and its edge set by $E_G$.
We denote the number of edges by $s$ and we fix an ordering of the set of edges given by
an identification of $E_G$ with the set of variables
of $K[t_1,\dots,t_s]$. If $H$ is a subgraph of $G$ we denote by $K[E_H]$ the polynomial subring on the variables of
$E_H$, under the above identification. To $G$ we associate a set $X$ defined by
\begin{equation}\label{eq: definition of X}
X = \set{ (\xx^{t_1},\xx^{t_2},\dots,\xx^{t_s})\in \PP^{s-1} \mid \xx \in (K^*)^{V_G}},
\end{equation}
where, if $\xx=\sum_{v\in V_G} x_v v$, with $x_v\in K^*$, for all $v\in V_G$, and $t_i$ is the edge $\set{v,w}$ (with $v\not = w$),
we set $\xx^{t_i}=x_{v}x_{w}$.
As $\xx^{t_i}\not = 0$, for all $i$, $X$ is a subset of the projective torus $\TT^{s-1}\subset \PP^{s-1}$.
We refer to $X$ as \emph{the projective toric subset parameterized by $G$}. Denote by $I(X)$
the vanishing ideal of $X$. Observe that 
$$I(\mathbb{T}^{s-1})=(t_1^{q-1}-t_s^{q-1},\dots,t_{s-1}^{q-1}-t_s^{q-1}) \subset I(X).$$

\smallskip

The notion of parameterized projective toric subsets and the study of their vanishing ideals
was introduced in \cite{ReSiVi11}. Unlike in the case of the edge ideal of $G$, we know that
$I(X)$ is always a Cohen--Macaulay homogeneous binomial ideal of height $s-1$ (Cf.~\cite[Theorem~2.1]{ReSiVi11}).
\smallskip

In the original definition of a parameterized projective toric subset, $G$ is assumed to be a simple graph.
However, on the one hand, we note that multiple edges play no part in the invariants of $K[E_G]/I(X)$. More precisely, if
$G'$ is the simple graph obtained from $G$ by removing all extra edges through any two given vertices
and $X'$ is the projective toric subset parameterized by $G'$, then $$K[E_{G'}]/I(X') \cong K[E_{G}]/I(X),$$
simply because $t_{j}-t_i\in I(X)$, for every extra edge $t_j$ between the endpoints of $t_i$.
On the other hand, allowing extra edges eases notation and simplifies statements and proofs.
\smallskip

As $X$ is a finite set, the value of the Hilbert function of $K[E_G]/I(X)$ is eventually equal
to $|X|$, the cardinality of $X$; therefore, $\deg K[E_G]/I(X)=|X|$.
A formula for the degree was first given in \cite{ReSiVi11} for connected graphs and then generalized to any graph in \cite[Theorem~3.2]{NeVPVi15}:
\begin{equation}\label{eq: degree}
\deg K[E_G]/I(X) =
\begin{cases}
\left (\frac{1}{2}\right)^{\gamma -1 } (q-1)^{n-m+\gamma-1}, \text{ if } \gamma\geq 1\text{ and } q\text{ is odd,}\\
(q-1)^{n-m+\gamma-1},\text{ if } \gamma\geq 1\text{ and } q\text{ is even,}\\
(q-1)^{n-m-1}, \text{ if }\gamma=0,
\end{cases}
\end{equation}
where ($q$ is the cardinality of $K$), $n$ is the cardinality of $V_G$,
$m$ is the number of connected components of $G$ and $\gamma$ the number of those that are non-bipartite.
\smallskip

Using the identity (\ref{eq: relation of reg with Hilbert Series}) and the fact that $\dim K[E_G]/I(X)=1$, we deduce that
the regula\-rity of $K[E_G]/I(X)$ coincides with its \emph{regularity index}, i.e., the minimum degree $d$ for which
the value of the Hilbert function at $k$ is equal to the value of the Hilbert polynomial at $k$, for every \mbox{$k\geq d$.}
(Cf.~\cite[Corollary~4.1.12]{monalg}.)
Since the Hilbert function of $K[E_G]/I(X)$ is strictly increa\-sing for \mbox{$0\leq d\leq \reg K[E_G]/I(X)$}
and the Hilbert polynomial is equal to $|X|=\deg K[E_G]/I(X)$ we conclude  that
$\reg K[E_G]/I(X)$ is the minimum $d$ for which the value of the Hilbert function at $d$ is
equal to $|X|=\deg K[E_G]/I(X)$.
\smallskip

In Table~\ref{table: values of reg}
we list cases for which this invariant is known.
\begin{table}[h]
\renewcommand{\arraystretch}{1.5}
\begin{tabular}{l|l}
 & $\reg K[E_G]/I(X)$ \\
\hline
$X=\TT^{s-1}$ & $(s-1)(q-2)$ \\
\hline
$G=\mathcal{K}_n$ & $\lceil (n-1)(q-2)/2 \rceil$ \\
\hline
$G=\mathcal{K}_{a,b}$ & $(\max\set{a,b}-1)(q-2)$ \\
\hline
$G=C_{2k}$ & $(k-1)(q-2)$ \\
\hline
$G=\mathcal{K}_{\alpha_1,\dots,\alpha_r}$ & $\max\set{\alpha_1(q-2),\dots,\alpha_r(q-2),\lceil (n-1)(q-2)/2\rceil}$ \\
\hline
\end{tabular}
\bigskip

\label{table: values of reg}
\caption{Known values of $\reg K[E_G]/I(X)$}
\end{table}
When $X$ coincides with the projective torus $\TT^{s-1}$ (which, from (\ref{eq: degree}), is the case, for example, if $G$ is a tree or an odd cycle),
$$
I(X) = (t_1^{q-1}-t_s^{q-1},\dots,t_{s-1}^{q-1}-t_s^{q-1}).
$$
Thus the regularity can be computed from (\ref{eq: relation of reg with Hilbert Series}), (see also \cite{SaVPVi11}).
The regularity in the case \mbox{$G=\mathcal{K}_n$} is given in \cite[Remark~3]{GoReSa13}.
The case $G= \mathcal{K}_{a,b}$ is given in \cite[Corollary~5.4]{GoRe08} and the case of
an even cycle, $G=C_{2k}$, in \cite[Theorem~6.2]{NeVPVi15}. In the case of a complete multipartite graph,
\mbox{$G=\mathcal{K}_{\alpha_1,\dots,\alpha_r}$} this invariant was computed in \cite[Theorem~4.3]{NeVP14}.
(Here $r \geq 3$ and the $n$ in the formula is \mbox{$|V_G|=\alpha_1+\cdots +\alpha_r$.})
\smallskip	

A graph $G$ is said to be $2$-connected if $|V_G|>2$ and, for every vertex $v\in V_G$, the graph
$G-v$ is connected. Any graph decomposes into \emph{blocks}, which consist of either maximal $2$-connected subgraphs,
single edges or isolated vertices. When $G$ is bipartite, we know that $\reg(E_G)/I(X)$ can be computed
from its block decomposition. More precisely, if $G$ is a simple bipartite graph
with no isolated vertices and $H_1,\dots,H_r$ are the blocks of $G$, then
\begin{equation}\label{eq: reg additive on blocks}
\reg K[E_G]/I(X) = \sum_{k=1}^r \reg K[E_{H_k}]/I(X_k) + (r-1)(q-2),
\end{equation}
where $X_k$ is the projective toric subset parameterized by the graph $H_k$, for each $k=1,\dots,r$
(cf.~\cite[Theorem~7.4]{NeVPVi14}).
This reduces the problem of computing
$\reg K[E_G]/I(X)$ for a bipartite graph to the case of $2$-connected graphs.
Notice that (\ref{eq: reg additive on blocks}), together with the formula for the regularity
in the case of even cycles, gives the regularity for any bipartite \emph{cactus graph} (a simple graph the blocks 
of which are edges or even cycles).
\smallskip

A $2$-connected graph can be reconstructed from one
of its cycles by adding a path by its endpoints (also known as an \emph{ear}) to the cycle and successively
repeating this operation (a finite number of times) to the graphs obtained (cf.~\cite[Proposition~3.1.1]{Di10}).
The simplest $2$-connected graph is a cycle. The second simplest $2$-connected graph is a cycle with an attached ear.
This graph can also be obtained by identifying the endpoints of $3$ paths, which, in turn, is also
known as the \emph{parallel composition}
of $3$ paths. Therefore the parallel composition of $3$ paths is
the first case of a $2$-connected graph for which the regularity of $K[E_G]/I(X)$ was
not known.
\smallskip

The aim of this work
is to compute the Castelnuovo--Mumford regularity of $K[E_G]/I(X)$, when $X$ belongs to the
family of projective toric subsets parameterized by a graph given as the
parallel composition of $r\geq 2$ paths, as illustrated in Figure~\ref{fig: G v2}.
(Notice that this graph may well have multiple edges if more than one $P_i$ has length equal to $1$.)

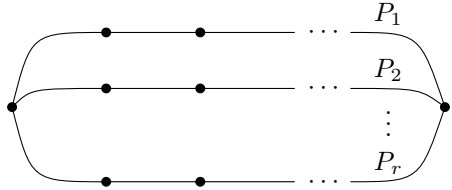
\begin{figure}[h]
\begin{center}
\begin{tikzpicture}[line cap=round,line join=round, scale=1.65]

%\draw (2.25,.25) node {$P_1$};

\draw [fill=black] (0,-.6) circle (1pt);
\draw [fill=black] (3.45,-.6) circle (1pt);

\draw (0,-.6)..controls (0.15,0)..(.75,0);
\draw [fill=black] \foreach \x in {0.75,1.5}
    {(\x,0)  circle (1pt) -- (\x+.75,0)  };
\draw (2.5,0) node{$\cdots$};
\draw (2.7,0)..controls (3.25,0)..(3.45,-.6);
\draw (3,.15) node {$P_1$};

\draw (0,-.6)..controls (0.15,-.45)..(.75,-.45);
\draw [fill=black] \foreach \x in {0.75,1.5}
    {(\x,-.45)  circle (1pt) -- (\x+.75,-.45) };
\draw (2.5,-.45) node{$\cdots$};
\draw (2.7,-.45)..controls (3.25,-.45)..(3.45,-.6);
\draw (3,-.3) node {$P_2$};

\draw (0,-.6)..controls (0.15,-1.2)..(.75,-1.2);
\draw [fill=black] \foreach \x in {0.75,1.5}
    {(\x,-1.2)  circle (1pt) -- (\x+.75,-1.2) };
\draw (2.5,-1.2) node{$\cdots$};
\draw (2.7,-1.2)..controls (3.25,-1.2)..(3.45,-.6);
\draw (3,-.67) node{$\vdots$};
\draw (3,-1.05) node {$P_r$};

\end{tikzpicture}
\end{center}
\caption{$G$, the parallel composition of paths $P_1,P_2,\dots,P_r$.}
\label{fig: G v2}
\end{figure}

\smallskip
\noindent

Our first main result concerns the bipartite case.
\begin{theorem}\label{th: main theorem}
Let $X$ be the projective toric subset parameterized by the parallel composition
of $r\geq 2$ paths, the lengths of which, $k_1,\dots,k_r$, have the same parity. Then
$$
\reg K[E_G]/I(X) =
\begin{cases}
 (\lfloor k_1/2\rfloor +\cdots +\lfloor k_r/2 \rfloor)(q-2),\;  \text{if $k_i$ are odd,}\\
 ( k_1/2 +\cdots +k_r/2 -1)(q-2),\;  \text{if $k_i$ are even.}
\end{cases}
$$
\end{theorem}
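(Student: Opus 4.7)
The plan is to compute $\reg K[E_G]/I(X)$ via the characterization recalled in the introduction: since $K[E_G]/I(X)$ is Cohen--Macaulay of Krull dimension $1$, its regularity equals the regularity index, i.e., the least $d$ such that the Hilbert function $H_X$ satisfies $H_X(d)=|X|$. Setting
\[
N = \begin{cases}
(\lfloor k_1/2\rfloor + \cdots + \lfloor k_r/2\rfloor)(q-2), & \text{if the } k_i \text{ are odd,}\\
(k_1/2 + \cdots + k_r/2 - 1)(q-2), & \text{if the } k_i \text{ are even,}
\end{cases}
\]
the theorem is equivalent to the two assertions $H_X(N)=|X|$ (upper bound) and $H_X(N-1)<|X|$ (lower bound), where $|X|=(q-1)^{\sum_i k_i - r}$ by \eqref{eq: degree}. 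A natural overall strategy is induction on $r$, the base case $r=2$ being the already-known even cycle $G=C_{k_1+k_2}$ of \cite[Theorem~6.2]{NeVPVi15}, in which $N$ reduces precisely to $(k-1)(q-2)$ with $2k=k_1+k_2$.

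For the upper bound I would work with an explicit generating set of $I(X)$ reflecting the graph structure: the torus binomials $t_i^{q-1}-t_s^{q-1}$ contained in $I(\TT^{s-1})\subset I(X)$, together with the cycle binomials attached to the even cycles $P_i\cup P_j$, for $1\leq i<j\leq r$. The cycle binomials trade a product of alternating edges along $P_j$ for the analogous product along $P_i$, while the torus relations swap $(q-1)$ copies of one edge for $(q-1)$ copies of another. Using a lex-type order in which the variables on one distinguished path $P_1$ are heaviest, these moves let one reduce any monomial of degree $>N$ modulo $I(X)$ to a monomial of strictly smaller degree whose support is concentrated on $P_1$. A careful count of the possible surviving exponents in a fully reduced residue then gives degree $\leq N$.

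For the lower bound I would exhibit a concrete monomial of degree $N$ whose image in $K[E_G]/I(X)$ is nonzero. A natural candidate is
\[
m=\prod_{i=1}^r \prod_{e\in M_i} t_e^{q-2},
\]
where $M_i$ is a maximum matching of $P_i$ (of size $\lfloor k_i/2\rfloor$ in the odd case and $k_i/2$ in the even case), with one matching edge omitted in the even case to account for the $-1$ in the formula. Non-vanishing of $m$ modulo $I(X)$ can be certified by evaluating at vertex assignments $\xx\in(K^*)^{V_G}$ chosen so that the induced character on the subtorus $X\subset \TT^{s-1}$ is non-trivial. The main obstacle is expected to be this lower bound: while the upper bound follows a standard ``reduce modulo a convenient generating set'' template, a witness at the exact target degree must be engineered by hand with parity-sensitive choices, and its non-vanishing verified by a character-theoretic computation on $X$. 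A unified treatment of the two parities should emerge from viewing the even case as the odd case with one extra edge attached at the end of one of the paths, which exactly accounts for the $-1$ term in the even-case formula.
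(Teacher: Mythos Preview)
Your assessment of the relative difficulty is inverted. In the paper the lower bound is the easy step: since $G$ is bipartite it is a connected spanning subgraph of a complete bipartite graph $\mathcal{K}_{a,b}$ with the same vertex set, and the inequality $\reg K[E_G]/I(X)\geq (\max\{a,b\}-1)(q-2)$ (equation~\eqref{eq: lower bound on reg for bipartite G}, a consequence of \cite[Lemma~2.13]{VPVi13} and the degree formula) gives exactly $N$ in both parity cases after a one-line count of the bipartition sizes. No explicit monomial witness or character computation is needed.

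The genuine work lies in the upper bound, and here your sketch has a gap. You propose to reduce an arbitrary monomial of degree $>N$ using only the torus binomials $t_i^{q-1}-t_j^{q-1}$ and the cycle binomials of the even cycles $P_i\cup P_j$, and then assert that ``a careful count of the possible surviving exponents'' yields degree $\leq N$. You have not argued that these elements generate $I(X)$ (they need not form a Gr\"obner basis, and normal forms with respect to a non-Gr\"obner generating set are not unique), nor have you indicated what the reduction procedure actually is or why it terminates at the claimed bound. The paper does not attempt a direct normal-form argument of this kind. Instead it passes to the Artinian quotient $K[E_G]/(I(X),g)$ for a suitable monomial $g$ (Proposition~\ref{prop: computing reg by reducing to Artinian quotient}) and proves membership of every monomial of the target degree in $(I(X),g)$. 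In the even case (Proposition~\ref{prop: opp inequality in the case k_i even}) this is done by induction on $r$ via a covering argument (Proposition~\ref{prop: regularity inequality over subgraphs}) with three overlapping subgraphs of $G$. In the odd case (Theorem~\ref{thm: opp inequality in the case k_i odd}) the argument is more delicate: induction on $k_1+\cdots+k_r$, a symmetry lemma that permutes same-parity edges along an ear (Lemma~\ref{lemma: swaping exponents}), and a direct construction, using the vertex-congruence criterion for membership in $I(X)$, of a binomial carrying the given monomial into $(I(X),g)$. Your plan does not supply any of these mechanisms, and without them the upper bound is not established.
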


We prove this result in Section~\ref{sec: main thm}, by proving the two inequalities involved.
The lower bound is a straightforward consequence of the fact that $G$ is bipartite
(cf.~(\ref{eq: lower bound on reg for bipartite G}) and Lemma~\ref{lemma: easy inequality}, below).
For the upper bound we divide the proof into two cases. The case of $k_i$ even is worked out by
induction on $r$ and arguing using suitable coverings of $G$ (cf.~Proposition~\ref{prop: opp inequality in the case k_i even}).
The case of $k_i$ odd is harder and relies on a characterization of the homogeneous binomials in $I(X)$
(cf.~Theorem~\ref{thm: opp inequality in the case k_i odd}).

With Theorem~\ref{th: main theorem} we are able to study the non-bipartite case.

\begin{theorem}\label{th: main theorem 2}
Let $X$ be the projective toric subset parameterized by a graph $G$ that is the parallel composition of 
$r\geq 2$ paths, the lengths of which have mixed parities. Then 
$$
\reg K[E_G]/I(X) = \reg K[E_{H_1}]/I(X_1) + \reg K[E_{H_2}]/I(X_2) + (q-2),
$$
where $H_1$ is the parallel composition of the paths of odd lengths, 
$H_2$ is the parallel composition of the paths of even lengths, and $X_1, X_2$, 
respectively, are the projective toric subsets they parameterize.
\end{theorem}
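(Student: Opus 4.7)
The mixed parity condition forces $G$ to be non-bipartite, since combining an odd path with an even path yields an odd cycle. Writing $u,v$ for the two identified endpoints, $G = H_1 \cup H_2$ with $V_{H_1}\cap V_{H_2} = \{u,v\}$ and $E_{H_1}\cap E_{H_2}=\emptyset$, so that $K[E_G] = K[E_{H_1}] \otimes_K K[E_{H_2}]$ and $I(X_1)+I(X_2) \subseteq I(X)$. Since $H_1$ and $H_2$ are bipartite parallel compositions of paths (of odd and even length respectively), Theorem~\ref{th: main theorem} provides the values $\reg K[E_{H_i}]/I(X_i)$ explicitly. I plan to prove the two inequalities separately.

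\textbf{Lower bound.} For $\reg K[E_G]/I(X) \ge \reg K[E_{H_1}]/I(X_1) + \reg K[E_{H_2}]/I(X_2) + (q-2)$, the plan is to invoke one of the general-graph regularity inequalities promised in the abstract, applied to the two-vertex separator $\{u,v\}$ splitting $G$ into $H_1$ and $H_2$. Alternatively, I would construct a homogeneous class of the claimed degree in $K[E_G]/I(X)$ that does not lie in the image of multiplication by $K[E_G]_1$ from the previous degree. A natural candidate is $[f_1 f_2 t_e^{q-2}]$, where $f_i\in K[E_{H_i}]$ realizes the regularity of $H_i$ and $e$ is an edge at a hub; the nonvanishing reduces to a binomial/closed-walk verification built on the characterization of the homogeneous binomials of $I(X)$ developed in the proof of Theorem~\ref{th: main theorem}, since no such binomial is supported both on $H_1$ and on $H_2$ in the required way.

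\textbf{Upper bound.} The reverse inequality is the more delicate direction. The plan is to analyse the Hilbert series of $K[E_G]/I(X)$ via the presentation $K[E_G]/(I(X_1)+I(X_2)) \cong (K[E_{H_1}]/I(X_1)) \otimes_K (K[E_{H_2}]/I(X_2))$, whose Hilbert series is the product $F_1(t)F_2(t)$. I would then quantify the reduction imposed by $I(X)/(I(X_1)+I(X_2))$ and show it is generated, modulo lower-order contributions, by a single binomial of degree $q-1$ in two distinguished edge-variables --- one from an odd path of $H_1$, one from an even path of $H_2$, both incident to the same hub --- mirroring the binomial relation responsible for the $(q-2)$ contribution in a lone odd cycle. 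Combining this factorization with the Cohen--Macaulay identity (\ref{eq: relation of reg with Hilbert Series}) yields numerator degree exactly $\reg K[E_{H_1}]/I(X_1) + \reg K[E_{H_2}]/I(X_2) + (q-2)$, giving the upper bound.

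\textbf{Main obstacle.} The hard part will be the precise description of $I(X)/(I(X_1)+I(X_2))$ and the verification that its contribution to the regularity is exactly $(q-2)$, not more and not less. This amounts to extending the binomial analysis of Theorem~\ref{th: main theorem} across the two-vertex separator and isolating the single ``mixing'' binomial that bridges the odd and even subcompositions; the ingredients are there in the odd-length case of Theorem~\ref{th: main theorem}, but they must be combined with the additional parity obstruction present in $G$. Once that structural statement is secured, both inequalities of Theorem~\ref{th: main theorem 2} fall out uniformly.
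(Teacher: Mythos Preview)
Your proposal has the difficulty of the two inequalities reversed, and in each direction the paper's argument is quite different from yours.

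\textbf{Upper bound.} In the paper this is the \emph{easy} direction, a three-line lemma. One takes the cover $H_1$ and $H'_2 = \{t_1\}\cup H_2$, where $t_1\in E_{H_1}$ is any edge incident to a hub. Since $E_{H_1}\cap E_{H'_2}\neq\emptyset$, Proposition~\ref{prop: regularity inequality over subgraphs} gives $\reg K[E_G]/I(X)\le \reg K[E_{H_1}]/I(X_1) + \reg K[E_{H'_2}]/I(X'_2)$, and Proposition~\ref{prop: aditivity on leaves} gives $\reg K[E_{H'_2}]/I(X'_2) = \reg K[E_{H_2}]/I(X_2) + (q-2)$, since $t_1$ is a pendant edge of $H'_2$. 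Your Hilbert-series route via $I(X)/(I(X_1)+I(X_2))$ is far more ambitious: that quotient ideal is not generated by a single binomial (every even closed walk that crosses between $H_1$ and $H_2$ contributes a binomial not in $I(X_1)+I(X_2)$), so the structural statement you identify as the ``main obstacle'' is both hard and, as stated, false. The paper sidesteps all of this.

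\textbf{Lower bound.} This is the genuinely harder direction, and the paper does not use a separator inequality of the kind you describe, nor an explicit nonvanishing monomial of the form $f_1 f_2 t_e^{q-2}$. Instead it proceeds case by case via vertex identification (Proposition~\ref{prop: general lower bound in contraction}): one identifies every other interior vertex along the odd paths (or along the even paths, depending on the case) with the hub $v$, collapsing those paths to pendant double edges attached to a smaller core graph whose regularity is already known (an odd cycle, possibly with pendants). Since identification can only decrease the regularity, this gives the required lower bound. A separate easy case uses the independent-set lower bound of Proposition~\ref{prop: lower bound using independent set}. Your two proposed routes for the lower bound are not worked out: the first defers to an unspecified inequality, and the second would require showing $t^a\notin (I(X),t_i)$ for your candidate monomial, which is exactly the kind of congruence analysis that the vertex-identification argument is designed to avoid.
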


We point out that the formula of Theorem~\ref{th: main theorem 2} includes the case when 
only one path has length of different parity. In this situation, the corresponding summand of the formula 
does not follow from Theorem~\ref{th: main theorem}, rather, it can be retrieved 
from the first formula of Table~\ref{table: values of reg};
more precisely, if $H_i$ consists of a path of length $k$ then $\reg K[E_{H_i}]/I(X_i) = (k-1)(q-2)$.
\smallskip

The proof of Theorem~\ref{th: main theorem 2} occupies the second half of Section~\ref{sec: main thm}. As with our other main result 
we prove the two inequalities separately (cf.~Lemma~\ref{lemma: easy inequality in the non-bipartite case} and 
Theorem~\ref{thm: harder inequality in the non-bipartite case}). This time, the easier inequality is the one giving the upper bound.
For the lower bound inequality we need to use different techniques to those used in the proof of Theorem~\ref{th: main theorem}.

Section~\ref{sec: prelim} provides the background theory and the results that are used in our proofs. 
We single out the new contributions of Proposition~\ref{prop: general lower bound in contraction}, 
Proposition~\ref{prop: regularity inequality over subgraphs} and Proposition~\ref{prop: lower bound using independent set}, as we believe 
these results will prove useful in the study of the regularity for a general graph.

\section{Preliminaries}
\label{sec: prelim}

Let $K$ be a finite field of cardinality $q$. As in Section~\ref{sec: intro}, $G$ will denote a graph with
edge set $E_G$ of cardinality $s$ (we always assume that $G$ has no isolated vertices). We
fix an identification of the variables of $K[t_1,\dots,t_s]$ with $E_G$ and denote the
former by $K[E_G]$. Let $X$ be the projective toric subset parameterized by $G$, as
defined in (\ref{eq: definition of X}). If $a=(a_1,\dots,a_s)\in \NN^s$, $t^a$ denotes
the monomial $t_1^{a_1}\cdots t_s^{a_s}\in K[E_G]$.
\smallskip

We start by recalling a criterion for membership in $I(X)$ of a homogeneous binomial
that only involves the combinatorics of $G$.
It involves checking a linear congruence at every vertex of the graph.
\begin{figure}[h]
\begin{center}
\begin{tikzpicture}[line cap=round,line join=round, scale=.5]

\draw(0,0) -- (1.5,2.5);
\draw(1.5,2.5) -- (2.25,3);
%\draw(1.5,2.5) -- (0.7,3);
\draw(1.5,2.5) -- (2.25,2);

% \draw(0,0) -- (-1,-3);
% \draw(-1,-3) -- (-1.75,-3.25);

\draw(0,0) -- (-2.5,1.5);
\draw(-2.5,1.5) -- (-2,2.25);
\draw(-2.5,1.5) -- (-3,2.25);

\draw(0,0) -- (-3,-1);
\draw(-3,-1) -- (-3.5,-1.75);
\draw(-3,-1) -- (-3.8,-0.75);

\draw(0,0) -- (1.5,-2.5);
\draw(1.5,-2.5) -- (2.25,-2.75);
\draw(1.5,-2.5) -- (1.75,-3.25);

\draw(0,0) -- (3,-.5);
\draw(3,-.5) -- (3.75,0);
%\draw(3,-.5) -- (3.5,-1);

%\draw [thick] (0,0) circle (55pt);

\draw [fill=white,thick] (0,0) circle (14pt);
\draw (0,0) node {$v$};

\draw [fill=white,thick] (1.5,2.5) circle (14pt);
%\draw [fill=white,thick] (-1,-3) circle (14pt);
\draw [fill=white,thick] (-2.5,1.5) circle (14pt);
\draw [fill=white,thick] (-3,-1) circle (14pt);
\draw [fill=white,thick] (1.5,-2.5) circle (14pt);
\draw [fill=white,thick] (3,-.5) circle (14pt);

\draw(2,.1) node {$t_{i_1}$};
\draw(.5,1.85) node {$t_{i_2}$};
\draw(-1.65,.6) node {$t_{i_3}$};
\draw(-1.5,-1.1) node {$t_{i_4}$};

\draw(-.45,-1.77) node {$.$};
\draw(-.2,-1.85) node {$.$};
\draw(0.07,-1.84) node {$.$};

\draw(1.4,-1.45) node {$t_{i_r}$};

\end{tikzpicture}
\end{center}
\caption{Congruence at vertex $v$}
\label{fig: congruence}
\end{figure}
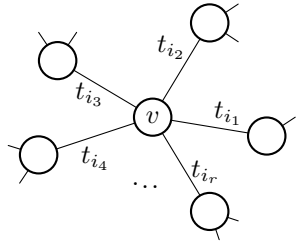
Let $v\in V_G$ and
let $t_{i_1},\dots,t_{i_r}$ be the edges incident to $v$ (cf.~Figure~\ref{fig: congruence}). Then by
\cite[Lemma~2.3]{NeVP14},
a homogeneous binomial $t^a-t^b\in K[E_G]$ belongs to $I(X)$ if and only if, for every vertex
$v\in V_G$, if $i_1,\dots,i_r$ are the indices of the edges incident to it, the congruence
\begin{equation}\label{eq: congruence}
a_{i_1}+\cdots +a_{i_r} \equiv b_{i_1}+\cdots + b_{i_r} \pmod{q-1}
\end{equation}
is satisfied. It follows easily from this criterion, that if
$H$ is a subgraph of $G$ and $Y$ is the projective toric subset parameterized by $H$, then 
$I(Y)= I(X)\cap K[E_H]$.

The following lemma will be
used in the proof of Theorem~\ref{thm: opp inequality in the case k_i odd}, below. Recall that
an ear of $G$ is a path which is maximal with respect to the condition that
all of its interior vertices have degree $2$ in $G$.

\begin{lemma}\label{lemma: swaping exponents}
Let $t^a-t^b\in K[E_G]$ be a homogeneous binomial. Let
$t_i$ and $t_j$ be edges along an ear of $G$ in a same parity position along this path.
Let $\sigma \colon K[E_G] \to K[E_G]$ be the automorphism
defined by swapping the two edges $t_i$ and $t_j$. Then
$$
t^a-t^b \in I(X) \iff \sigma(t^a) - \sigma(t^b) \in I(X).
$$
\end{lemma}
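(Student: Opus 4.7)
The plan is to invoke the vertex congruence criterion (\ref{eq: congruence}) for both $t^a - t^b$ and $\sigma(t^a) - \sigma(t^b)$, and simply compare, vertex by vertex, which congruences change under the swap. Fix notation for the ear: let it be the path $v_0 v_1 \cdots v_\ell$ with edges $e_k = \{v_{k-1}, v_k\}$ for $k=1,\dots,\ell$, and suppose $t_i = e_p$, $t_j = e_{p'}$ with $p < p'$ and $p \equiv p' \pmod 2$. If $a' $ denotes the exponent vector of $\sigma(t^a)$, then $a'_p = a_{p'}$, $a'_{p'} = a_p$, and $a'_k = a_k$ otherwise; the same holds for $b'$. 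Set $c_k := a_k - b_k \pmod{q-1}$.

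Since $p$ and $p'$ have the same parity, $p' - p \geq 2$, so $e_p$ and $e_{p'}$ share no common vertex; thus every vertex of $G$ is incident to \emph{at most one} of them. For vertices $v$ incident to neither, the congruence (\ref{eq: congruence}) at $v$ is the same for $(a,b)$ and for $(a',b')$. For a vertex $v$ incident to exactly one of $e_p, e_{p'}$, the only effect of $\sigma$ on the relevant sum is to replace $a_p$ by $a_{p'}$ (or vice versa), and similarly for $b$; therefore the congruence at $v$ for $(a',b')$ is equivalent to the congruence at $v$ for $(a,b)$ \emph{combined with} the single extra condition $c_p \equiv c_{p'} \pmod{q-1}$. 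Hence the implication $t^a - t^b \in I(X) \Rightarrow \sigma(t^a) - \sigma(t^b) \in I(X)$ reduces to deriving $c_p \equiv c_{p'} \pmod{q-1}$ from the vertex congruences for $(a,b)$.

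This is the step where the ear hypothesis is used: each interior vertex $v_k$ with $1 \leq k \leq \ell - 1$ has degree $2$ in $G$ and is incident only to $e_k$ and $e_{k+1}$, so (\ref{eq: congruence}) at $v_k$ reads
$$
c_k + c_{k+1} \equiv 0 \pmod{q-1}.
$$
A trivial induction gives $c_k \equiv (-1)^{k-1} c_1 \pmod{q-1}$ for every $k \in \{1,\dots,\ell\}$, and the same-parity hypothesis $p \equiv p' \pmod 2$ yields $c_p \equiv c_{p'} \pmod{q-1}$, as needed. The reverse implication is immediate from $\sigma^2 = \mathrm{id}$, by applying the forward direction to $\sigma(t^a) - \sigma(t^b)$.

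The only conceptual obstacle is the possibility that $e_p$ and $e_{p'}$ share a vertex, which would force us to track both exchanges simultaneously at that vertex and could spoil the bookkeeping; but this is ruled out precisely by the same-parity assumption. Everything else is a direct translation of the criterion (\ref{eq: congruence}) into a short linear-algebra argument along the ear.
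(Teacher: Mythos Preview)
Your argument is correct and rests on the same tool as the paper---the vertex congruence criterion (\ref{eq: congruence})---but you organize the verification differently. The paper first reduces to the case where $t_i$ and $t_j$ sit in adjacent same-parity positions (i.e., are separated by a single edge $t_k$), invoking the fact that any same-parity transposition factors through such adjacent swaps; it then checks by hand that the two systems of four congruences (at the four vertices touching $t_i,t_k,t_j$) are equivalent. You skip the reduction and treat an arbitrary pair of positions $p<p'$ directly: the chain of degree-$2$ interior congruences $c_k+c_{k+1}\equiv 0$ along the ear yields $c_k\equiv(-1)^{k-1}c_1$, hence $c_p\equiv c_{p'}$ whenever $p\equiv p'\pmod 2$, and this single identity is exactly what is needed at each of the (at most four) vertices whose congruence is affected by the swap. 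Your route is slightly more uniform and makes transparent why the same-parity hypothesis is the crucial one; the paper's route is more hands-on but hides the reason behind the explicit linear algebra. Either way the content is the same.
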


\begin{proof}
It is clear we can reduce to the case illustrated in Figure~\ref{fig: edges along ear}.
\begin{figure}[h]
\begin{center}
\begin{tikzpicture}[line cap=round,line join=round, scale=.5]

\draw (0,0) -- (9,0);

\draw (0,0) -- (-.9,-.5);
\draw (0,0) -- (-.8,.5);
\draw (-1,.2) node {$\vdots$};

\draw (9,0) -- (9.9,-.5);
\draw (9,0) -- (9.8,.5);
\draw (10,.2) node {$\vdots$};

\draw [fill=white,thick] \foreach \x in {0,3,6,9}
    {(\x,0) circle (14pt) };

\draw (0,0) node {$v_1$};
\draw (3,0) node {$v_2$};
\draw (6,0) node {$v_3$};
\draw (9,0) node {$v_4$};

\draw (1.5,.5) node {$t_{i}$};
\draw (4.5,.5) node {$t_{k}$};
\draw (7.5,.5) node {$t_{j}$};

\end{tikzpicture}
\end{center}
\caption{Swapping edges along an ear of $G$.}
\label{fig: edges along ear}
\end{figure}
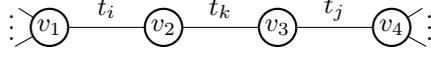
Since $\sigma(t^{{a}})-\sigma(t^{{b}})$ is homogeneous if and only if $t^a-t^b$ is, it suffices to check the equivalence of
the system of $4$ linear congruences given by the $4$ vertices $v_1,v_2,v_3$ and $v_4$.
Let ${E(v_i)}$ denote the set of edges incident to $v_i$ and denote
by $E_1$ the set $E(v_1)\setminus \set{t_i}$ and, likewise, $E_4=E(v_4)\setminus \set{t_j}$.
Let
\[
A_1 =\sum_{t_\ell\in E_1} a_\ell,\quad
A_4 =\sum_{t_\ell\in E_4} a_\ell,\quad
B_1 =\sum_{t_\ell\in E_1} b_\ell,\quad \text{and}\quad
B_4 =\sum_{t_\ell\in E_4} b_\ell.
\]
Then, we need to show that the two systems of congruences modulo $q-1$
\[
\begin{cases}
A_1+a_i \equiv B_1 + b_i \\
a_i+a_k \equiv b_i + b_k \\
a_k+a_j \equiv b_k + b_j \\
a_j+A_4 \equiv b_j + B_4 \\
\end{cases} \quad \text{and}\quad\:\:
\begin{cases}
A_1+a_j \equiv B_1 + b_j \\
a_j+a_k \equiv b_j + b_k \\
a_k+a_i \equiv b_k + b_i \\
a_i+A_4 \equiv b_i + B_4 \\
\end{cases}
\]
are equivalent, which is clearly true.
\end{proof}

Our approach to computing $\reg K[E_G]/I(X)$ is to consider an Artinian quotient
$K[E_G]/I(X,g)$, where $g\in K[E_G]$ is a suitable monomial.

\begin{prop}\label{prop: membership of a zero dimensional reduction}
Let $g\in K[E_G]$ be a monomial.
\begin{enumerate}
\item There exists a monomial order and
a binomial Gr\"obner basis $\mathcal B$ of $I(X)$ such that $\mathcal B \cup \set{g}$ is a Gr\"obner basis for the ideal $(I(X),g)\subset K[E_G]$.
\item A monomial $t^a\in K[E_G]$ belongs to $(I(X),g)$ if and only if there exists a monomial \mbox{$t^b\in K[E_G]$} such that $t^a-gt^b$ is homogeneous and belongs to $I(X)$.
\end{enumerate}
\end{prop}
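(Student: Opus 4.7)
The plan is to establish part (1) first and deduce part (2) from it via the division algorithm.

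For part (1), I would construct a monomial order $\prec$ adapted to $g$: letting $T_g=\set{t_i:t_i\mid g}$, take a block (elimination) order in which every variable outside $T_g$ precedes every variable inside $T_g$, with ties broken by any fixed order within each block. Let $\mathcal{B}$ be a binomial Gr\"obner basis of $I(X)$ with respect to $\prec$; existence is guaranteed by the general theory of binomial ideals. To verify Buchberger's criterion for $\mathcal{B}\cup\set{g}$, only the new $S$-polynomials $S(f,g)$ with $f=t^\alpha-t^\beta\in\mathcal{B}$ need attention, since $S(f,f')$ for $f,f'\in\mathcal{B}$ already reduces via $\mathcal{B}$. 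The chosen order biases $\lt(f)=t^\alpha$ toward variables outside $T_g$, so that when $\gcd(\lt(f),g)=1$ one has $S(f,g)=-g\cdot t^\beta$, which reduces to zero in one step using $g$.

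For part (2), the implication $(\Leftarrow)$ is immediate, since $t^a=gt^b+(t^a-gt^b)\in(g)+I(X)$. For $(\Rightarrow)$, suppose $t^a\in(I(X),g)$. By (1) and the division algorithm, the normal form of $t^a$ modulo $\mathcal{B}\cup\set{g}$ is zero. The reduction of a monomial by a binomial $f=t^\alpha-t^\beta\in\mathcal{B}$ with $t^\alpha\mid t^c$ produces the single monomial $t^{c-\alpha+\beta}$, so the reduction proceeds as a chain of monomials $t^a=t^{a_0}\to t^{a_1}\to\cdots\to t^{a_k}$, with each difference $t^{a_{j-1}}-t^{a_j}\in I(X)$, and a final cancellation step using $g$, which forces $g\mid t^{a_k}$. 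Writing $t^{a_k}=g\cdot t^b$ yields $t^a-gt^b=\sum_j(t^{a_{j-1}}-t^{a_j})\in I(X)$. Since every $f\in\mathcal{B}$ is homogeneous and $g$ is a monomial, $|a_j|=|a|$ for every $j$, so $|b|=|a|-|g|$ and $t^a-gt^b$ is homogeneous.

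The main obstacle lies in completing part (1): specifically, handling those $f\in\mathcal{B}$ whose leading monomial is not coprime with $g$, such as a power relation $t_i^{q-1}-t_s^{q-1}$ when both $t_i$ and $t_s$ lie in $T_g$. In such cases the coprimality-based reduction of $S(f,g)$ fails, and one must show, using the vertex-congruence criterion for membership in $I(X)$ together with the edge-swapping provided by Lemma~\ref{lemma: swaping exponents}, that $\mathcal{B}$ can be chosen (possibly non-reduced) so that every remaining S-pair reduces modulo $\mathcal{B}\cup\set{g}$. A more robust alternative is to bypass (1) entirely and prove (2) directly from the $\bigoplus_{v\in V_G}\mathbb{Z}/(q-1)$-grading of $K[E_G]/I(X)$ induced by the vertex congruences: each graded component is spanned by a single monomial class, so restricting the identity $t^a=h+gh'$ (with $h\in I(X)$ and $h'\in K[E_G]$) to the graded component containing $t^a$ isolates a single monomial $t^b$ for which $t^a-gt^b\in I(X)$.
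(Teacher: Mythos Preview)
Your approach to part (i) --- choosing a monomial order that pushes the variables of $g$ to the bottom and then arguing that $\lt(f)$ is coprime to $g$ for every $f\in\mathcal{B}$ --- is exactly what the paper does (graded reverse lex with the variables of $g$ last, followed by a short induction claiming that no such variable divides the leading term). Your instinct to flag an obstacle here is correct, and in fact part (i) is \emph{false} as stated, so neither the paper's induction nor your suggested repair via the congruence criterion and edge-swapping can succeed. A concrete counterexample: take $q=3$, let $G$ be a path on three edges $t_1,t_2,t_3$ (so that $I(X)=(t_1^{2}-t_3^{2},\,t_2^{2}-t_3^{2})$), and set $g=t_1t_2$. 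For \emph{every} monomial order the initial ideal of $I(X)$ is $(t_i^{2},t_j^{2})$ for some pair $\set{i,j}\subset\set{1,2,3}$, whereas $t_1^{3}$, $t_2^{3}$ and $t_3^{4}$ are all monomials lying in $(I(X),g)$; no choice of two of the squares together with $t_1t_2$ contains all three, so $\mathcal{B}\cup\set{g}$ is never a Gr\"obner basis of $(I(X),g)$.

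Your second, grading-based route to (ii) is the right one, and it is genuinely different from the paper's argument, which derives (ii) from (i) via the division algorithm exactly as in your first paragraph on (ii). The key observation --- that the $\mathbb{Z}\times(\mathbb{Z}/(q-1))^{V_G}$-graded pieces of $K[E_G]/I(X)$ are at most one-dimensional, because any two monomials of equal standard degree satisfying the same vertex congruences already differ by an element of $I(X)$ --- gives the nontrivial direction cleanly: projecting $t^a=h+gh'$ onto the graded piece of $t^a$ either yields $t^a\in I(X)$ (impossible, since $X\subset\TT^{s-1}$) or exhibits a monomial $t^b$ with $gt^b$ in that same piece, whence $t^a-gt^b\in I(X)$ by the congruence criterion. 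This salvages what matters: (ii) holds, and only (ii) is invoked elsewhere in the paper.
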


\begin{proof}
Since $I(X)$ is generated by homogeneous binomials, the Gr\"obner basis obtained from such a set, 
after fixing any monomial order,
consists of homogeneous binomials,
by Buchberger's Algorithm. Let $t_{i_1},\dots,t_{i_r}$ be the variables dividing $g$. 
Fix the graded reverse lexicographical order after reordering the variables in way such that 
\mbox{$t_{i_1}\succ\cdots \succ t_{i_r}$} are the last variables of the ring.
Let $\mathcal{B}$ be a binomial Gr\"obner basis of $I(X)$
with respect to such order. To prove (i) it suffices to show that $S(f,g)$ reduces to $0$ modulo $\mathcal{B}\cup \set{g}$, for every
$f\in \mathcal{B}$. Let $f=t^a-t^b\in \mathcal{B}$. Assume, without loss of generality,
that $\operatorname{lt}(f)=t^a$. If $t_{i_r}$ divides $t^a$, then $t_{i_r}$ does not divide $t^b$ (we
may assume the generating set we start with consists of irreducible binomials). This implies that
$t^b\succ t^a$, hence $t_{i_r}$ does not divide $t^a$. Arguing in the same way, by induction,
we conclude that none of $t_{i_1},\dots,t_{i_r}$ divides $t^a$ and thus $\gcd(g,t^a)=1$.
Accordingly,
$$
S(f,g) = g(t^a-t^b) - t^ag = -gt^b
$$
which reduces to zero modulo $\mathcal{B}\cup \set{g}$. This completes the proof of (i).

Let $t^a$ be a monomial. One direction of the equivalence in (ii) is clear.
Assume that \mbox{$t^a\in (I(X),g)$.}	 Then, considering the Gr\"obner basis $\mathcal{B}\cup \set{g}$ obtained in (i),
$t^a$ has zero remainder after division with $\mathcal{B}\cup \set{g}$. Since
the division of a monomial by a binomial is still a monomial, the division algorithm stops the first time $g$ is used. Thus, the partial quotients of division are monomials
$t^a = h_0$, $h_1,\dots,h_k$
such that $h_{i}-h_{i-1}\in I(X)$, for all $i=1,\dots,k$ and such that $g$ divides $h_k$. Writing $h_k=gt^b$, we get a homogeneous binomial $t^a-gt^b$ which 
belongs to $I(X)$, as required.
\end{proof}

\begin{prop}\label{prop: computing reg by reducing to Artinian quotient}
Let $g\in K[E_G]$ be a monomial. Then $K[E_G]/(I(X),g)$
is zero in degree $d$ if and only if $d\geq \reg K[E_G]/I(X) + \deg(g)$.
\end{prop}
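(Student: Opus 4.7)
The plan is to use the short exact sequence of graded modules induced by multiplication by $g$, together with the description of the Hilbert function of $K[E_G]/I(X)$ given in the introduction.

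First, I would show that any monomial $g\in K[E_G]$ is a nonzerodivisor modulo $I(X)$. Since $X\subset \mathbb{T}^{s-1}$, no coordinate function $t_i$ vanishes on any point of $X$; consequently, if $f\in K[E_G]$ satisfies $t_i f\in I(X)$, then $f(P)=0$ for every $P\in X$, i.e., $f\in I(X)$. Inductively, each monomial $g=t^a$ is a nonzerodivisor modulo $I(X)$, so $(I(X):g)=I(X)$.

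Consequently, there is a short exact sequence of graded $K[E_G]$-modules
\[
0 \longrightarrow (K[E_G]/I(X))(-\deg g) \stackrel{\cdot g}{\longrightarrow} K[E_G]/I(X) \longrightarrow K[E_G]/(I(X),g) \longrightarrow 0.
\]
Taking dimensions in degree $d$, and writing $H(d):=\dim_K (K[E_G]/I(X))_d$,
\[
\dim_K \bigl(K[E_G]/(I(X),g)\bigr)_d \;=\; H(d) - H(d-\deg g).
\]

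Now I would invoke the description of $H$ recalled in Section~\ref{sec: intro}: since $\dim K[E_G]/I(X)=1$ and the Hilbert polynomial equals $|X|$, the function $H$ is strictly increasing on $0\le d\le \reg K[E_G]/I(X)$ and equals $|X|$ for all $d\ge \reg K[E_G]/I(X)$. Therefore $H(d)=H(d-\deg g)$ if and only if both values equal $|X|$, which, since $\deg g\ge 1$, happens if and only if $d-\deg g\ge \reg K[E_G]/I(X)$. This is exactly the claimed equivalence.

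The only nontrivial point is the nonzerodivisor property of monomials, which is the step I would highlight; everything else is a routine Hilbert-function computation from the short exact sequence and the known behaviour of $H$ up to the regularity index.
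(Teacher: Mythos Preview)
Your proof is correct and follows essentially the same route as the paper: both arguments establish that $g$ is regular on $R=K[E_G]/I(X)$, pass to the short exact sequence $0\to R(-\deg g)\to R\to R/(g)\to 0$, and then read off the vanishing of the Artinian quotient from the Hilbert-theoretic behaviour of $R$. The only cosmetic difference is that the paper phrases the last step via the degree of the Hilbert series (identity~(\ref{eq: relation of reg with Hilbert Series})), whereas you compute $H(d)-H(d-\deg g)$ directly using the monotonicity of $H$ recalled in Section~\ref{sec: intro}; these are equivalent readings of the same exact sequence.
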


\begin{proof}
We denote $K[E_G]/I(X)$ by $R$ and, by abuse of notation, $K[E_G]/(I(X),g)$ by $R/g$.
Since $g$ is an $R$-regular element and $R$ is Cohen--Macaulay,
$$
\dim R/g = \dim R-1 = 0.
$$
Moreover, since $R/g$ is a quotient of a polynomial ring with the standard grading by a homogeneous ideal, its regularity
index is the minimum degree $d$ for which $(R/g)_d=0$.
(It is easy to see that $(R/g)_d=0$, for some $d$, implies $(R/g)_{d+k}=0$, for
all $k\geq 0$.) Hence we need to show that the regularity index of $R/g$ is equal to
$\reg K[E_G]/I(X) + \deg(g)$. Consider the following exact sequence of graded $K[E_G]$-modules:
\begin{equation}\nonumber
0\to  R[-\deg(g)]\stackrel{\cdot g}{\longrightarrow} R \to R/g\to 0.
\end{equation}
Comparing the degree of the Hilbert series of the three terms
and using the identity (\ref{eq: relation of reg with Hilbert Series}), we get $\deg F_{R/g} +1 = \reg R  +\deg(g)$, where $F_{R/g}$ is the Hilbert Series
of the $K[E_G]$-module $R/g$ in rational function form. As $\deg F_{R/g} +1$ is the
regularity index (cf. \cite[Corollary~4.1.12]{monalg}), we have proved the claim.
\end{proof}

We note that the following proposition can be easily derived from \cite[Theorem~7.4]{NeVPVi14}
in the bipartite case, and from \cite[Corollary~3.10]{SaVPVi11} and \cite[Lemma~1]{GoReHe03} 
in the non-bipartite case, when G is a unicyclic connected graph and the only cycle of G is odd.
Here, we do not assume G is bipartite nor a unicyclic connected graph with an odd cycle.

\begin{prop}\label{prop: aditivity on leaves}
Let $v\in V_G$ be a vertex of degree $1$. Assume that $|E_G|>1$.
Consider the graph \mbox{$G'=G-v$} and denote by $X'$ the projective toric subset parameterized by it. Then 
$$\reg K[E_G]/I(X) = \reg K[E_{G'}]/I(X') + (q-2).$$
\end{prop}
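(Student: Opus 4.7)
The plan is to establish a graded isomorphism between $R = K[E_G]/I(X)$ and the twisted hypersurface ring $R'[t_s]/(t_s^{q-1}-t_j^{q-1})$, where $R' = K[E_{G'}]/I(X')$, $t_s$ denotes the unique edge incident to $v$, $w$ is its other endpoint, and $t_j \neq t_s$ is any edge of $G$ incident to $w$. (If $w$ is itself a leaf of $G$, so that $G'$ acquires an isolated vertex, one takes $t_j$ to be any other edge of $G$; removing the isolated vertex from $G'$ does not affect $K[E_{G'}]/I(X')$.) Once this isomorphism is in hand, the conclusion will follow from a Hilbert series comparison via~(\ref{eq: relation of reg with Hilbert Series}).

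The core technical step is the ideal decomposition
\[
I(X) \;=\; I(X')\cdot K[E_G] + (t_s^{q-1}-t_j^{q-1}).
\]
The inclusion ``$\supseteq$'' follows from the restriction property $I(X') = I(X)\cap K[E_{G'}]$ recalled just before Lemma~\ref{lemma: swaping exponents} and a direct verification of (\ref{eq: congruence}) showing $t_s^{q-1}-t_j^{q-1}\in I(X)$. For the reverse inclusion, take a homogeneous binomial $t^a - t^b\in I(X)$. The congruence~(\ref{eq: congruence}) at $v$ forces $a_s \equiv b_s \pmod{q-1}$, so one may assume $a_s = b_s + k(q-1)$ with $k\geq 0$ and induct on $k$. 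The base case $k=0$ factors as $t^a - t^b = t_s^{b_s}(t^{a'} - t^{b'})$ with $t^{a'}, t^{b'}\in K[E_{G'}]$; the $t_s$-contributions cancel in the congruence at $w$, so $t^{a'} - t^{b'}$ satisfies~(\ref{eq: congruence}) at every vertex of $G'$ and hence lies in $I(X')$. For the inductive step, the identity $(t_s^{q-1}-t_j^{q-1})\cdot t^{a-(q-1)e_s}\in (t_s^{q-1}-t_j^{q-1})$ allows one to rewrite $t^a$ modulo this ideal as $t^{a-(q-1)e_s+(q-1)e_j}$, thereby lowering $k$ by one.

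With the decomposition in hand, $R \cong R'[t_s]/(t_s^{q-1}-t_j^{q-1})$ as graded $K$-algebras. The variable $t_j$ is a non-zero-divisor on $R'$ (it evaluates non-trivially at every point of $X'\subset \TT^{s-2}$ and $R'$ is reduced), hence so is $t_j^{q-1}$, and a coefficient-by-coefficient check in the free $R'$-module $R'[t_s]$ then shows that $t_s^{q-1}-t_j^{q-1}$ is a non-zero-divisor on $R'[t_s]$. The resulting short exact sequence
\[
0 \longrightarrow R'[t_s][-(q-1)] \xrightarrow{\;\cdot(t_s^{q-1}-t_j^{q-1})\;} R'[t_s] \longrightarrow R \longrightarrow 0
\]
yields $F_R(z) = F_{R'}(z)\cdot(1 + z + \cdots + z^{q-2})$. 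Since $R$ and $R'$ are both Cohen--Macaulay of dimension $1$, applying (\ref{eq: relation of reg with Hilbert Series}) twice gives $\reg R = \deg F_R(z) + 1$ and $\reg R' = \deg F_{R'}(z) + 1$, whence $\reg R = \reg R' + (q-2)$.

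The principal obstacle is the ideal decomposition in the second paragraph; the induction on $k$ must be set up so as to preserve homogeneity at every step, and the transfer of the congruences at vertices of $G'$ from those at vertices of $G$ hinges on the observation that the $t_s$-contribution at $w$ vanishes precisely once $a_s = b_s$.
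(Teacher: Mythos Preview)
Your proof is correct and takes a genuinely different route from the paper. The paper proves the two inequalities separately, each time invoking Proposition~\ref{prop: computing reg by reducing to Artinian quotient}: for ``$\leq$'' it shows that every monomial of degree $\reg K[E_{G'}]/I(X')+(q-2)+1$ lies in $(I(X),t_j)$, splitting into the cases $a_i\geq q-1$ and $a_i<q-1$ for the exponent of the leaf edge; for ``$\geq$'' it multiplies a monomial in $K[E_{G'}]$ by $t_i^{q-2}$, applies Proposition~\ref{prop: membership of a zero dimensional reduction}, and uses the congruence at $v$ to push the resulting binomial back into $K[E_{G'}]$. Your approach instead establishes the structural isomorphism $R\cong R'[t_s]/(t_s^{q-1}-t_j^{q-1})$ via the ideal decomposition and then reads off the regularity from the Hilbert-series identity $F_R(z)=F_{R'}(z)(1+z+\cdots+z^{q-2})$. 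This is strictly more informative --- it determines the full Hilbert series of $R$ from that of $R'$, not merely its degree --- and makes transparent why the increment is exactly $q-2$. The paper's argument, by contrast, stays entirely within the monomial-membership framework of Propositions~\ref{prop: membership of a zero dimensional reduction} and~\ref{prop: computing reg by reducing to Artinian quotient} and never needs an explicit presentation of $I(X)$. One minor remark: your hypothesis that $t_j$ be incident to $w$ is never actually used (the inductive step only needs $t_s^{q-1}-t_j^{q-1}\in I(\mathbb{T}^{s-1})\subset I(X)$, and the base case does not involve $t_j$ at all), so the parenthetical about $w$ being a leaf can be dropped --- any edge $t_j\neq t_s$ works throughout.
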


\begin{proof}
Let $t_i\in E_G$ be incident to $v$ and let $t_j\in E_G\setminus t_i$. 
According to Proposition~\ref{prop: computing reg by reducing to Artinian quotient}, 
to show that $$\reg K[E_G]/I(X) \leq \reg K[E_{G'}]/I(X') + (q-2)$$ it suffices to show that
for any monomial $t^a\in K[E_G]$ of degree $\reg K[E_{G'}]/I(X') + (q-2)+1$
we have $t^a \in (I(X),t_j)$. Let $t^a$ be such a monomial. If $a_i\geq q-1$ then 
writing $t^a = t^{a'}t_i^{q-1}$ for some $a'\in \NN^s$, we get:
$$
t^a = t^{a'}(t_i^{q-1}-t_j^{q-1}) + t^{a'}t_j^{q-1} \in (I(X),t_j).
$$
Assume now that $a_i<q-1$. Consider $a'\in \NN^s$, with $a'_i=0$, such that $t^a = t^{a'}t_i^{a_i}$.
Then $\deg t^{a'}=\deg t^a - a_i \geq \reg K[E_{G'}]/I(X')+1$, by our assumptions. As $t^{a'}$ belongs to 
$K[E_{G'}]$, using Proposition~\ref{prop: computing reg by reducing to Artinian quotient} we get
$t^{a'} \in (I(X'),t_j)\subset K[E_{G'}]$. As $G'$ is a subgraph of $G$ we have $I(X')\subset I(X)$ and therefore  
$t^{a'} \in (I(X),t_j)$. 
\smallskip

Using the same idea, let us now show that 
$$\reg K[E_{G'}]/I(X') \leq  \reg K[E_{G}]/I(X) - (q-2).$$
Let $t^a\in K[E_{G'}]$ be a monomial of degree $\reg K[E_{G}]/I(X) - (q-2)+1$.
Then $t^at_i^{q-2}$ belongs to $K[E_G]$ and has degree $\reg K[E_{G}]/I(X)+1$. We deduce that 
$t^a t_i^{q-2} \in (I(X),t_j)$. By Proposition~\ref{prop: membership of a zero dimensional reduction},
there exists a monomial $t^b\in K[E_G]$ such that $t^at_i^{q-2} - t_jt^b \in I(X)$. However the congruence
at vertex $v$ gives  $b_i= q-2 + k(q-1)$, for some $k\geq 0$. 
Let $b'\in \NN^s$ be such that $b'_i=0$ and $t^b = t^{b'}t_i^{b_i}$. Then: 
$$
t^at_i^{q-2} - t_jt^b \in I(X) \implies t^a-t_jt_i^{k(q-1)}t^{b'}\in I(X)\implies t^a - t_j^{1+k(q-1)}t^{b'} \in I(X).
$$
Since $t^a - t_j^{1+k(q-1)}t^{b'} \in K[E_{G'}]$ and $I(X')=I(X)\cap K[E_{G'}]$, we deduce that 
$t^a \in (I(X'),t_j)$.
\end{proof}

Let $G$ be a connected graph and a spanning subgraph of a bipartite graph $H$. Let $Y$ be the projective toric subset parameterized by
$H$. Then, by \cite[Lemma~2.13]{VPVi13}, if $|X|=|Y|$, it follows that $$\reg K[E_G]/I(X)\geq \reg K[E_H]/I(Y).$$
Hence if $G$ is a connected bipartite spanning subgraph of $\mathcal{K}_{a,b}$, by (\ref{eq: degree}) 
the assumption on the cardinality of the associated parameterized projective toric subsets holds and we obtain:
\begin{equation}\label{eq: lower bound on reg for bipartite G}
\reg K[E_G]/I(X)\geq (\max\set{a,b}-1)(q-2).
\end{equation}

In the remainder of this section we introduce three new inequalities involving 
$\reg K[E_G]/I(X)$. They will play an important role in the proofs of
Theorem~\ref{th: main theorem} and Theorem~\ref{th: main theorem 2}.

\begin{prop}\label{prop: general lower bound in contraction}
Let $v_1$ and $v_2$ be two vertices of $G$ such that 
$\set{v_1,v_2}$ is a non-edge of $G$. Let $G'$ be the graph obtained by 
identifying $v_1$ with $v_2$ and denote by $X'$ the projective toric subset parameterized by it. 
Then $\reg K[E_G]/I(X)\geq  \reg K[E_{G'}]/I(X')$.
\end{prop}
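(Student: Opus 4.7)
The plan is to establish the ideal containment $I(X)\subseteq I(X')$ inside a common polynomial ring and then pull back the regularity inequality through the Artinian reduction supplied by Proposition~\ref{prop: computing reg by reducing to Artinian quotient}. The first observation is that, because $\set{v_1,v_2}$ is a non-edge of $G$, the identification of $v_1$ with $v_2$ neither deletes nor merges any edges (it may only increase the multiplicity of edges between the new vertex and a common neighbour of $v_1,v_2$). Hence $E_{G'}$ is canonically identified with $E_G$, and under this identification the two polynomial rings $K[E_G]$ and $K[E_{G'}]$ coincide.

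To verify $I(X)\subseteq I(X')$ I would apply the congruence criterion of \cite[Lemma~2.3]{NeVP14}. Since $I(X)$ is generated by homogeneous binomials, it suffices to test an arbitrary homogeneous binomial $t^a-t^b\in I(X)$. The congruences (\ref{eq: congruence}) at a vertex $w\neq v_1,v_2$ are literally the same in $G$ and in $G'$, while at the new identified vertex $v\in V_{G'}$ the set of incident edges is the disjoint union $E(v_1)\sqcup E(v_2)$, so the corresponding congruence is the termwise sum, modulo $q-1$, of the two congruences at $v_1$ and $v_2$ in $G$. Consequently any binomial satisfying every congruence coming from $G$ automatically satisfies every congruence coming from $G'$, proving $I(X)\subseteq I(X')$.

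For the final step, pick any edge variable $g=t_j\in K[E_G]$; since $X$ and $X'$ are contained in the projective torus, $t_j$ is a regular element on both $K[E_G]/I(X)$ and $K[E_{G'}]/I(X')$, so Proposition~\ref{prop: computing reg by reducing to Artinian quotient} applies in both cases. From $I(X)\subseteq I(X')$ we get $(I(X),g)\subseteq (I(X'),g)$ and hence a graded surjection
$$
K[E_G]/(I(X),g)\twoheadrightarrow K[E_{G'}]/(I(X'),g).
$$
Therefore every degree in which the source vanishes is also a degree in which the target vanishes, which, by Proposition~\ref{prop: computing reg by reducing to Artinian quotient} applied to both quotients (with the same $g$), gives $\reg K[E_{G'}]/I(X')\leq \reg K[E_G]/I(X)$, as required. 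No serious obstacle is expected: the entire argument rests on the combinatorial observation that identifying two non-adjacent vertices simply replaces a pair of vertex-congruences by their sum, a strictly weakening operation on the linear system that cuts out the vanishing ideal.
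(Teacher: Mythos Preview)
Your proof is correct and follows the same overall strategy as the paper: identify $K[E_G]$ with $K[E_{G'}]$, establish $I(X)\subseteq I(X')$, and then pass through the Artinian reduction of Proposition~\ref{prop: computing reg by reducing to Artinian quotient} with a single edge variable. The only difference is in how the containment $I(X)\subseteq I(X')$ is justified: the paper argues directly from the parameterizations that $X'\subseteq X$ (the map defining $X'$ is obtained from that of $X$ by imposing $x_{v_1}=x_{v_2}$), whereas you verify it via the congruence criterion, observing that the congruence at the merged vertex is the sum of the two original congruences; both arguments are short and equivalent.
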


\begin{proof}
The edge sets of $G$ and $G'$ have the same cardinality. Morevoer,
there is an induced identification of the edges of $G'$ with the variables of the 
polynomial ring $K[t_1,\dots,t_s]$ under which  $K[E_G]=K[E_{G'}]$.
\begin{figure}[h]
\begin{center}
\begin{tikzpicture}[line cap=round,line join=round, scale=2.5]

\draw (-0.5,-.5) node {$G$};

\draw [fill=black] (0,.5) circle (1pt);
\draw [fill=black] (0,-.5) circle (1pt);
\draw [fill=black] (-.7,0) circle (1pt);
\draw (-.7,0) -- (-.75,.15);
\draw (-.7,0) -- (-.8,-.1);

\draw [fill=black] (.5,.4) circle (1pt);
\draw (.5,.4) -- (.55,.55);
\draw (.5,.4) -- (.65,.45);

\draw [fill=black] (.5,-.4) circle (1pt);
\draw (.5,-.4) -- (.55,-.55);
\draw (.5,-.4) -- (.65,-.45);

\draw [fill=black] (-.3,.75) circle (1pt);
\draw (-.3,.75) -- (-.45,.8);
\draw (-.3,.75) -- (-.2,.85);

\draw [fill=black] (-.15,-.7) circle (1pt);
\draw (-.15,-.7) -- (-.25,-.75);
\draw (-.15,-.7) -- (-.1,-.8);

\draw [black] (-.7,0).. controls (-.65,.15) and (-.05,.55) .. (0,.5);
\draw [black] (-.7,0).. controls (-.65,-.15) and (-.05,-.55) .. (0,-.5);
\draw [black] (0,.5).. controls (.1,.55) and (.45,.45) .. (.5,.4);
\draw [black] (0,-.5).. controls (.1,-.55) and (.45,-.45) .. (.5,-.4);
\draw [black] (.5,.4).. controls (.6,.4) and (.6,-.4) .. (.5,-.4);

\draw [black] (-.3,.75)--(0,.5);
\draw [black] (-.15,-.7)--(0,-.5);

\draw (0,.35) node {$v_1$};
\draw (0,-.35) node {$v_2$};

\draw[gray,<->] (0,.25)--(0,-.25);

\draw [gray] (.9,0).. controls (1,.07) and (1,-.07) ..(1.1,0); 
\draw [gray,->] (1.1,0).. controls (1.2,.07) and (1.2,-.05) ..(1.3,0);

%%%% Segundo grafo

\draw (2,-.5) node {$G'$};

\draw [fill=black] (2.5,0) circle (1pt);

\draw [fill=black] (1.8,0) circle (1pt);
\draw (1.8,0) -- (1.75,.15);
\draw (1.8,0) -- (1.7,-.1);

\draw [fill=black] (3,.4) circle (1pt);
\draw (3,.4) -- (3.05,.55);
\draw (3,.4) -- (3.15,.45);

\draw [fill=black] (3,-.4) circle (1pt);
\draw (3,-.4) -- (3.05,-.55);
\draw (3,-.4) -- (3.15,-.45);

\draw [fill=black] (2.27,.3) circle (1pt);
\draw (2.27,.3) -- (2.11,.36);
\draw (2.27,.3) -- (2.38,.4);

\draw [fill=black] (2.37,-.22) circle (1pt);
\draw (2.37,-.22) -- (2.25,-.24);
\draw (2.37,-.22) -- (2.44,-.33);

\draw [black] (1.8,0).. controls (1.85,.05) and (2.45,.05) .. (2.5,0);
\draw [black] (1.8,0).. controls (1.85,-.05) and (2.45,-.05) .. (2.5,0);

\draw [black] (2.5,0).. controls (2.6,.15) and (2.95,.45) .. (3,.4);
\draw [black] (2.5,0).. controls (2.6,-.15) and (2.95,-.45) .. (3,-.4);
\draw [black] (3,.4).. controls (3.1,.4) and (3.1,-.4) .. (3,-.4);

\draw [black] (2.27,.3)--(2.5,0);
\draw [black] (2.37,-.22)--(2.5,0);

\draw [gray, ->] (2.3,.75)..controls (2.75,.7) and (2.5,.3) .. (2.5,.1);

\draw (2,.75) node {$v_1=v_2$};

\end{tikzpicture}
\end{center}
\caption{The graph obtained by identifying two vertices of $G$.}
\label{fig: identifying a vertex}
\end{figure}
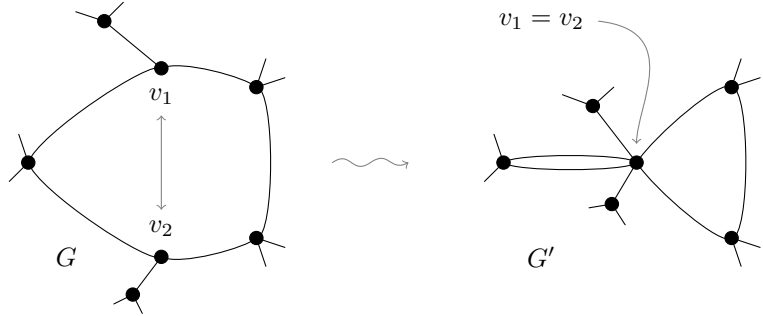
Since the parameterization of $X'$
is obtained by adding the restriction that the coefficient of $v_1$ in the formal sum 
$\sum_{v\in V_G} x_v v$ be equal to the coefficient of $v_2$ we obtain $X'\subset X$ (cf.~(\ref{eq: definition of X})), and thus, 
$I(X)\subset I(X')$. 
Let $t_1$ be an edge.
According to 
Proposition~\ref{prop: computing reg by reducing to Artinian quotient},
to show that $$\reg K[E_{G'}]/I(X') \leq \reg K[E_{G}]/I(X)$$ it suffices to prove 
that for any monomial $t^a$ of degree $\reg K[E_G]/I(X) +1$ we have $t^a \in (I(X'),t_1)$. Let 
$t^a$ be such a monomial. Then, using again Proposition~\ref{prop: computing reg by reducing to Artinian quotient}, 
we deduce that $(t^a\in I(X),t_1)$. Since $I(X)\subset I(X')$ we get $(t^a\in I(X'),t_1)$.
\end{proof}

\begin{prop}\label{prop: regularity inequality over subgraphs}
Let $H_1,H_2\subset G$ be subgraphs such that $E_G=E_{H_1}\cup E_{H_2}$ and $E_{H_1}\cap E_{H_2}\not = \emptyset$.
Let $X_1$ and $X_2$ be the projective toric subsets parameterized by $H_1$ and $H_2$ and $I(X_1)\subset K[E_{H_1}]$,
$I(X_2)\subset K[E_{H_2}]$ their corresponding vanishing ideals. Then
$$\reg K[E_G]/I(X)\leq \reg K[E_{H_1}]/I(X_1) + \reg K[E_{H_2}]/I(X_2).$$
\end{prop}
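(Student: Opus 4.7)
The plan is to reduce the inequality to the Artinian-quotient criterion of Proposition~\ref{prop: computing reg by reducing to Artinian quotient}, using an edge that lies in both subrings as the witness monomial. Fix any edge $t_j\in E_{H_1}\cap E_{H_2}$; crucially, $t_j$ belongs to each of the polynomial subrings $K[E_{H_1}]$ and $K[E_{H_2}]$. By Proposition~\ref{prop: computing reg by reducing to Artinian quotient}, it suffices to show that every monomial $t^a\in K[E_G]$ of degree
$$
d \,\geq\, \reg K[E_{H_1}]/I(X_1) + \reg K[E_{H_2}]/I(X_2) + 1
$$
lies in $(I(X),t_j)$.

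The key structural remark is that $E_G=E_{H_1}\cup E_{H_2}$ allows us to split any monomial. Given such a $t^a$, write $t^a=uv$, where $u$ is the product of all prime-power factors of $t^a$ whose variable lies in $E_{H_1}$, and $v$ is the product of the remaining prime-power factors (whose variables therefore lie in $E_{H_2}\setminus E_{H_1}$). Then $u\in K[E_{H_1}]$, $v\in K[E_{H_2}]$, and $\deg u+\deg v=d$. By the congruence criterion recalled in Section~\ref{sec: prelim}, one has $I(X_i)=I(X)\cap K[E_{H_i}]\subset I(X)$ for $i=1,2$.

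Now split into two cases according to which factor carries most of the degree. If $\deg u\geq \reg K[E_{H_1}]/I(X_1)+1$, then Proposition~\ref{prop: computing reg by reducing to Artinian quotient} applied inside $K[E_{H_1}]$, with the witness monomial $t_j\in E_{H_1}$, gives $u\in (I(X_1),t_j)$. Since $I(X_1)\subset I(X)$, we conclude $u\in (I(X),t_j)$ and hence $t^a=uv\in (I(X),t_j)$. Otherwise $\deg u\leq \reg K[E_{H_1}]/I(X_1)$, which by the degree equation forces $\deg v\geq \reg K[E_{H_2}]/I(X_2)+1$; the symmetric argument inside $K[E_{H_2}]$ (again with witness $t_j$, now viewed as an edge of $H_2$) yields $v\in (I(X_2),t_j)\subset (I(X),t_j)$, and once more $t^a\in (I(X),t_j)$.

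The argument has no real obstacle beyond identifying the right witness monomial: one needs $g$ to belong simultaneously to both subrings so that Proposition~\ref{prop: computing reg by reducing to Artinian quotient} can be invoked in either of them, and that is precisely why the hypothesis $E_{H_1}\cap E_{H_2}\neq\emptyset$ enters. The only subtlety is that the inclusions $I(X_i)\subset I(X)$ must be noted explicitly, but they are immediate from the combinatorial characterization~(\ref{eq: congruence}) of $I(X)$.
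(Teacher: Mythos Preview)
Your proof is correct and follows essentially the same approach as the paper: pick a common edge $t_j\in E_{H_1}\cap E_{H_2}$, invoke Proposition~\ref{prop: computing reg by reducing to Artinian quotient} to reduce to showing that every monomial of the relevant degree lies in $(I(X),t_j)$, split such a monomial as a product of a piece in $K[E_{H_1}]$ and a piece in $K[E_{H_2}]$, and apply the pigeonhole principle on degrees together with the inclusions $I(X_i)\subset I(X)$.
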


\begin{proof}
Let $t_i\in E_{H_1}\cap E_{H_2}$.
According to Proposition~\ref{prop: computing reg by reducing to Artinian quotient}, it
suffices to show that any monomial $t^a\in K[E_G]$, of degree $\reg K[E_{H_1}]/I(X_1) + \reg K[E_{H_2}]/I(X_2) + 1$,
belongs to $(I(X),t_i)$. Let us write $t^a=t^bt^c$ for some
$t^b\in K[E_{H_1}]$ and $t^c\in K[E_{H_2}]$. Since
\mbox{$\deg(t^a)=\deg(t^b)+\deg(t^c)$},
we have \mbox{$\deg(t^b)\geq \reg K[E_{H_1}]/I(X_1) +1$} or \mbox{$\deg(t^c)\geq \reg K[E_{H_2}]/I(X_2) + 1$.}
By Proposition~\ref{prop: computing reg by reducing to Artinian quotient} it follows that
$t^b \in (I(X_1),t_i)\subset K[E_{H_1}]$ or $t^c \in (I(X_2),t_i)\subset K[E_{H_2}]$, respectively.
In both cases we conclude that  $t^a\in (I(X),t_i)$.
\end{proof}

\begin{prop}\label{prop: lower bound using independent set}
Let $\set{v_1,\dots,v_r}$ be an independent set of vertices of $G$. Assume
that there is an edge in $G-\set{v_1,\dots,v_r}$. Then $\reg K[E_G]/I(X) \geq r(q-2)$. 
\end{prop}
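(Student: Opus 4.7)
The plan is to apply Proposition~\ref{prop: computing reg by reducing to Artinian quotient} with a carefully chosen monomial $g=t_j$: it suffices to exhibit a monomial $t^a\in K[E_G]$ of degree $r(q-2)$ with $t^a\notin (I(X),t_j)$, as this will force
$$
r(q-2) < \reg K[E_G]/I(X) + \deg(t_j) = \reg K[E_G]/I(X) + 1,
$$
which is exactly the desired inequality.

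For each $i=1,\dots,r$, pick an edge $t_{e_i}\in E_G$ incident to $v_i$, which exists because $G$ has no isolated vertices. The $t_{e_i}$ are pairwise distinct: if $t_{e_i}=t_{e_k}$ for $i\neq k$, then this edge would join $v_i$ and $v_k$, violating independence. Set $t^a = t_{e_1}^{q-2}\,t_{e_2}^{q-2}\cdots t_{e_r}^{q-2}$, of degree $r(q-2)$, and let $t_j$ be any edge in $G-\{v_1,\dots,v_r\}$ (which exists by hypothesis).

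Suppose, for contradiction, that $t^a\in (I(X),t_j)$. By Proposition~\ref{prop: membership of a zero dimensional reduction} there is a monomial $t^b$ such that $t^a-t_jt^b$ is homogeneous and lies in $I(X)$; in particular, $\deg(t^b)=r(q-2)-1$. The key step is to apply the congruence criterion (\ref{eq: congruence}) at each $v_i$: among the edges incident to $v_i$, the only one appearing in $t^a$ is $t_{e_i}$ (again by independence), contributing $q-2$ to the left-hand side, while $t_j$ is not incident to any $v_i$, so it contributes nothing to the right-hand side. Hence, writing $N_i$ for the sum of $b_e$ over edges $e$ incident to $v_i$,
$$
N_i \equiv q-2 \pmod{q-1}, \quad\text{and therefore } N_i\geq q-2 \text{ for each } i.
$$

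Summing these inequalities over $i$ and using once more that $\{v_1,\dots,v_r\}$ is independent (so each edge of $G$ is incident to at most one $v_i$), one gets
$$
r(q-2)\leq \sum_{i=1}^r N_i \leq \sum_{e\in E_G} b_e = \deg(t^b) = r(q-2)-1,
$$
a contradiction. The main (very small) obstacle is simply to notice that the independence hypothesis is used twice: once to ensure that the chosen edges $t_{e_i}$ are distinct and that $t_{e_i}$ is the only chosen edge incident to $v_i$, and once to guarantee that the sums $N_i$ double-count no edge. The role of the auxiliary edge $t_j\in G-\{v_1,\dots,v_r\}$ is exactly to make the congruence at each $v_i$ purely about the exponents of $t^b$.
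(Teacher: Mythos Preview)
Your proof is correct and follows essentially the same approach as the paper's: choose one edge incident to each $v_i$, form the monomial $t^a=(t_{e_1}\cdots t_{e_r})^{q-2}$, take $g$ to be an edge of $G-\{v_1,\dots,v_r\}$, and derive a contradiction from the congruences at the $v_i$ together with the independence of the set. Your write-up is in fact slightly more explicit than the paper's in spelling out the inequality $\sum_i N_i\leq \deg(t^b)=r(q-2)-1$, but the argument is the same.
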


\begin{proof}
By Proposition~\ref{prop: computing reg by reducing to Artinian quotient},
to show that $\reg K[E_G]/I(X) \geq r(q-2)$ it suffices to show that 
there exists and edge $t_i$ and a monomial $t^a\in K[E_G]$ of degree $r(q-2)$
that does not belong to $(I(X),t_i)$. Let $t_i$ be an edge of $G-\set{v_1,\dots,v_r}$ 
and, for every $i=1,\dots,r$, let $t_{j_i}$ be an edge incident to $v_i$. Such edges exist since we assume 
that $G$ has no isolated vertices. Notice also that since $\set{v_1,\dots,v_r}$ is an independent set the edges
$t_{j_1},\dots,t_{j_r}$ are distinct. Consider the monomial:
$$
t^a = (t_{j_1}\cdots t_{j_r})^{q-2}
$$
and let us show that $t^a\not \in (I(X),t_i)$. Suppose the contrary holds. 
Then, by Proposition~\ref{prop: membership of a zero dimensional reduction},
there exists a monomial $t^b$ such that $t^a-t_it^b$ is homogeneous and belongs to $I(X)$.
Since $t_i$ is not incident to any of the vertices of 
$\set{v_1,\dots,v_r}$, evaluating the congruence at a particular vertex of this set, 
we conclude that the degree of $t^b$ in the edges incident to it is $\geq q-2$. 
Since, by assumption, these vertices possess no common incident edges we deduce that the 
degree of $t^b$ in edges incident to the vertices of $\set{v_1,\dots,v_r}$ is $\geq r(q-2)$. In particular, 
$\deg(t^b)\geq r(q-2)$. But this implies that $t^a-t_it^b$ is not homogeneous, which is a contradiction. 
\end{proof}

We note that Proposition~\ref{prop: lower bound using independent set} implies (\ref{eq: lower bound on reg for bipartite G}).

\section{Proof of the main results}
\label{sec: main thm}

The aim of this section is to prove Theorem~\ref{th: main theorem} and Theorem~\ref{th: main theorem 2}.
In what follows $G$ is the parallel composition of $r\geq 2$ paths $P_1,\dots,P_r$ of lengths
$k_1,\dots,k_r$. In a first instance, we assume that these integers have the same parity, so that
$G$ is bipartite. If $r=2$ and one of $k_1,k_2$ is $>1$, then $G$ is an even cycle of length $k_1+k_2$.
In this case, by \cite[Theorem~6.2]{NeVPVi15}, we know that
$\reg K[E_G]/I(X) = ((k_1+k_2)/2-1)(q-2)$. If $r=2$ and $k_1=k_2=1$, then
$G$ is a graph on $2$ vertices with
$2$ multiple edges. Hence the value of the regularity is the same as in the case of a tree
with a single edge, which is $(s-1)(q-2)=0$ (cf.~Table~\ref{table: values of reg}).
Both cases agree with the formula in Theorem~\ref{th: main theorem}.

\begin{lemma}\label{lemma: easy inequality}
$$
\reg K[E_G]/I(X) \geq
\begin{cases}
 (\lfloor k_1/2\rfloor +\cdots +\lfloor k_r/2 \rfloor)(q-2),\; \text{if $k_i$ are odd,}\\
 ( k_1/2 +\cdots +k_r/2 -1)(q-2),\; \text{if $k_i$ are even.}
\end{cases}
$$
\end{lemma}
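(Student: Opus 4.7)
The plan is to deduce this lower bound directly from the bipartite inequality~(\ref{eq: lower bound on reg for bipartite G}). Since the $r\geq 2$ paths share the two common endpoints, which I denote $u$ and $v$, and since $k_1,\dots,k_r$ have a common parity, the graph $G$ is connected and bipartite; consequently $G$ is a spanning subgraph of $\mathcal{K}_{a,b}$, where $a$ and $b$ are the sizes of the two sides $A,B$ of the bipartition of $V_G$. The cardinality hypothesis underlying (\ref{eq: lower bound on reg for bipartite G}) is automatic by (\ref{eq: degree}), since both $G$ and $\mathcal{K}_{a,b}$ are connected and bipartite on $a+b$ vertices, so $|X|=(q-1)^{a+b-2}$ agrees with the analogous quantity for $\mathcal{K}_{a,b}$. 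It therefore suffices to show that $\max\{a,b\}-1$ equals $\sum_{i=1}^r \lfloor k_i/2\rfloor$ in the odd case and $\sum_{i=1}^r k_i/2-1$ in the even case.

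To compute $a=|A|$ and $b=|B|$, I place $u\in A$ and walk along each $P_i$: the vertex in position $j$ along $P_i$ lies in $A$ if $j$ is even and in $B$ if $j$ is odd, while $v$ lies in $B$ when the $k_i$ are odd and in $A$ when the $k_i$ are even. If all the $k_i$ are odd, each $P_i$ contributes $(k_i-1)/2=\lfloor k_i/2\rfloor$ interior vertices to each side, and adding the two shared endpoints gives $|A|=|B|=1+\sum_{i=1}^r \lfloor k_i/2\rfloor$. If all the $k_i$ are even, each $P_i$ contributes $k_i/2-1$ interior vertices to $A$ and $k_i/2$ to $B$, so
$$|A|=2+\sum_{i=1}^r\bigl(k_i/2-1\bigr)\quad\text{and}\quad |B|=\sum_{i=1}^r k_i/2;$$
since $r\geq 2$ we have $|B|\geq|A|$, hence $\max\{|A|,|B|\}=\sum_{i=1}^r k_i/2$. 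Substituting these values into (\ref{eq: lower bound on reg for bipartite G}) yields the two cases of the lemma.

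There is essentially no obstacle here: the proof amounts to parity bookkeeping followed by a single invocation of the already-established inequality (\ref{eq: lower bound on reg for bipartite G}). As an alternative one could instead invoke the newly introduced Proposition~\ref{prop: lower bound using independent set}, applied to $S=A\setminus\{u\}$ in the odd case (which leaves every edge incident to $u$ in $G-S$) and to $S=B\setminus\{w\}$ for any interior $w\in B$ in the even case (which leaves the two edges incident to $w$ in $G-S$); the cardinalities $|S|$ match the same integers computed above.
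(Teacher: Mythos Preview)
Your argument is correct and follows exactly the approach of the paper: compute the two sides of the bipartition of $G$ and invoke~(\ref{eq: lower bound on reg for bipartite G}). The paper states this more tersely, simply noting that $G$ is a connected spanning subgraph of $\mathcal{K}_{\rho,\rho}$ with $\rho=1+\sum_i\lfloor k_i/2\rfloor$ in the odd case and of $\mathcal{K}_{\rho-r+2,\rho}$ with $\rho=\sum_i k_i/2$ in the even case; your vertex-by-vertex count recovers precisely these values, and your alternative via Proposition~\ref{prop: lower bound using independent set} is also valid.
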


\begin{proof}
If $k_i$ are odd, then $G$ is a connected spanning subgraph of $\mathcal K_{\rho,\rho}$, where $\rho$
is the integer \mbox{$1 + \lfloor k_1/2\rfloor +\cdots +\lfloor k_r/2 \rfloor$.} If $k_i$
are even, then $G$ is a connected spanning subgraph of $\mathcal{K}_{(\rho-r+2),\rho}$ where
$\rho$ is the integer $k_1/2+\cdots +k_r/2$. Hence the claim follows from (\ref{eq: lower bound on reg for bipartite G}).
\end{proof}

In the next two results we prove the opposite inequalities in each case.
We need to fix some notation. For each $i\in \set{1,\dots,r}$, let $\sigma_i = k_1+\cdots + k_{i-1}$, so that, in particular,
$\sigma_1=0$.
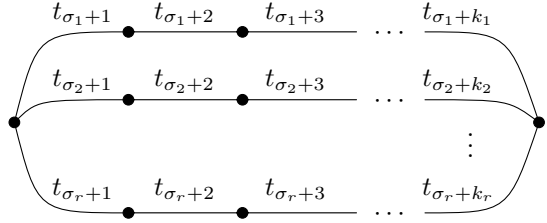
\begin{figure}[ht]
\begin{center}
\begin{tikzpicture}[line cap=round,line join=round, scale=2]

%\draw (2.25,.25) node {$P_1$};

\draw [fill=black] (0,-.6) circle (1pt);
\draw [fill=black] (3.45,-.6) circle (1pt);

\draw (0,-.6)..controls (0.15,0)..(.75,0);
\draw [fill=black] \foreach \x in {0.75,1.5}
    {(\x,0)  circle (1pt) -- (\x+.75,0)  };
\draw (2.48,-.006) node{$\cdots$};
\draw (2.7,0)..controls (3.25,0)..(3.45,-.6);
\draw (.45,.12) node {$t_{\sigma_1+1}$};
\draw (1.12,.12) node {$t_{\sigma_1+2}$};
\draw (1.85,.12) node {$t_{\sigma_1+3}$};
\draw (2.92,.12) node {$t_{\sigma_1+k_1}$};

\draw (0,-.6)..controls (0.15,-.45)..(.75,-.45);
\draw [fill=black] \foreach \x in {0.75,1.5}
    {(\x,-.45)  circle (1pt) -- (\x+.75,-.45) };
\draw (2.48,-.456) node{$\cdots$};
\draw (2.7,-.45)..controls (3.25,-.45)..(3.45,-.6);
\draw (.45,.-.33) node {$t_{\sigma_2+1}$};
\draw (1.12,-.33) node {$t_{\sigma_2+2}$};
\draw (1.85,-.33) node {$t_{\sigma_2+3}$};
\draw (2.92,-.33) node {$t_{\sigma_2+k_2}$};

\draw (0,-.6)..controls (0.15,-1.2)..(.75,-1.2);
\draw [fill=black] \foreach \x in {0.75,1.5}
    {(\x,-1.2)  circle (1pt) -- (\x+.75,-1.2) };
\draw (2.48,-1.206) node{$\cdots$};
\draw (2.7,-1.2)..controls (3.25,-1.2)..(3.45,-.6);
\draw (3,-.7) node{$\vdots$};
\draw (.45,.-1.05) node {$t_{\sigma_r+1}$};
\draw (1.12,-1.05) node {$t_{\sigma_r+2}$};
\draw (1.85,-1.05) node {$t_{\sigma_r+3}$};
\draw (2.92,-1.05) node {$t_{\sigma_r+k_r}$};

\end{tikzpicture}
\end{center}
\caption{Labeling of the edges of $G$.}
\label{fig: labelling of edges}
\end{figure}

\noindent
Let us label the edges of $G$ as in Figure~\ref{fig: labelling of edges}. For each $i\in \set{1,\dots,r}$, let
$f_i,g_i\in K[E_G]$ be:
\begin{equation}\label{eq: def of f_i and g_i}
f_i=t_{\sigma_i+1}\cdot t_{\sigma_i+3} \cdots t_{\sigma_i+2\lceil k_i/2 \rceil-1}\quad \text{and} \quad
g_i=t_{\sigma_i+2}\cdot t_{\sigma_i+4} \cdots t_{\sigma_i+2\lfloor k_i/2 \rfloor}.
\end{equation}
(In other words, $f_i$ is the product of every other edge in $P_i$ starting with $t_{\sigma_i+1}$ and
$g_i$ is the product of every other edge in $P_i$ starting with $t_{\sigma_i+2}$.)
We notice that, for all $i\not = j$,
\begin{equation}
\label{eq: relation between fi and gj}
f_ig_j - f_jg_i \in  I(X).
\end{equation}

\begin{prop}\label{prop: opp inequality in the case k_i even}
If $k_i$ are even, then $\reg K[E_G]/I(X)\leq (k_1/2+\cdots +k_r/2 -1)(q-2)$.
\end{prop}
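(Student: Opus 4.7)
The plan is to prove the inequality by induction on $r$, combining Proposition~\ref{prop: regularity inequality over subgraphs} (subgraph covering) with Proposition~\ref{prop: aditivity on leaves} (leaf removal) and the even-cycle formula of~\cite[Theorem~6.2]{NeVPVi15}. The base case $r=2$ is immediate: $G$ is the even cycle $C_{k_1+k_2}$ and its regularity equals $((k_1+k_2)/2-1)(q-2)=D$.

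For the inductive step with $r\geq 4$, I would take the covering $G=H_1\cup H_2$, where $H_1=P_1\cup P_2\cup\{e_{\sigma_3+1}\}$ is the even cycle $C_{k_1+k_2}$ together with the first edge of $P_3$ attached as a pendant, so that the adjacent vertex of $P_3$ becomes a leaf in $H_1$, and $H_2$ is the parallel composition of $P_3,P_4,\ldots,P_r$. They share the edge $e_{\sigma_3+1}$, so Proposition~\ref{prop: regularity inequality over subgraphs} applies. Proposition~\ref{prop: aditivity on leaves} combined with the cycle formula gives
\[
\reg K[E_{H_1}]/I(X_1)=\bigl((k_1+k_2)/2-1\bigr)(q-2)+(q-2)=\bigl((k_1+k_2)/2\bigr)(q-2),
\]
while the inductive hypothesis applied to $H_2$ (a parallel composition of $r-2\geq 2$ paths of even lengths) yields
\[
\reg K[E_{H_2}]/I(X_2)\leq\bigl(k_3/2+\cdots+k_r/2-1\bigr)(q-2).
\]
Summing these estimates produces exactly $D$, so the bound follows for $r\geq 4$.

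The main obstacle is the base case $r=3$, since in the above construction $H_2$ collapses to a single path $P_3$ whose regularity $(k_3-1)(q-2)$ overshoots the target by $(k_3/2)(q-2)$. When some $k_i=2$, the fix is to use instead the covering by two even cycles sharing the length-$2$ path: taking $H_1=C_{k_1+k_2}$, $H_2=C_{k_1+k_3}$ (after relabeling so that $P_1$ has length $2$), the cycle formula gives a sum of regularities equal to $D$ on the nose. In the residual subcase $r=3$ with all $k_i\geq 4$, no covering of this form succeeds, and I expect to close the argument by direct Artinian reduction via Proposition~\ref{prop: computing reg by reducing to Artinian quotient}: select $t_*$ to be an edge incident to one of the hub vertices $u$ or $w$ and, for each monomial $t^a$ of degree $D+1$, produce a witness $t^b$ of degree $D$ with $t^a-t_*t^b\in I(X)$ by solving the congruences~(\ref{eq: congruence}) inductively along the three ears and reconciling the local adjustments at $u$ and $w$. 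Under the tight degree budget imposed by $D$, the interaction of the two hub congruences is the part I expect to be the most technically demanding.
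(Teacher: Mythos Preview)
Your argument for $r\geq 4$ and for $r=3$ with some $k_i=2$ is correct and, in those ranges, is a cleaner covering argument than what the paper does. The paper instead runs a single uniform argument for all $r\geq 3$ at the level of monomials (via Proposition~\ref{prop: computing reg by reducing to Artinian quotient}), and that uniform argument is precisely what handles your missing case.

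The genuine gap is the case $r=3$ with all $k_i\geq 4$. Your proposed ``direct Artinian reduction'' is only a wish: you have not identified any mechanism for building the witness $t^b$, and solving the congruences along three ears and then ``reconciling at the hubs'' is exactly where a naive construction breaks down, because the two hub congruences are coupled through every ear simultaneously. No two-piece covering of the type you use elsewhere can succeed here: with $H_1=P_i\cup P_j$ and $H_2=P_i\cup P_\ell$ the sum of regularities is $(k_i+k_j/2+k_\ell/2-2)(q-2)$, which overshoots $D$ unless $k_i=2$; and if one piece is a single path its regularity $(k_\ell-1)(q-2)$ already exceeds the budget.

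The idea you are missing, and which the paper uses, is a \emph{three-way degree split with an overlapping path}. Fix $t_1\in E_{P_1}$ and take a monomial $t^a$ of degree $D+1$; write $t^a=t^b t^c$ with $t^b\in K[E_{P_1}]$ and $t^c$ supported on $P_2\cup\cdots\cup P_r$. First test $t^c$ against the subgraph $H=\{t_1\}\cup P_2\cup\cdots\cup P_r$, whose regularity (by induction together with Proposition~\ref{prop: aditivity on leaves}) is $(k_2/2+\cdots+k_r/2)(q-2)$; if $\deg t^c$ exceeds this you are done. Otherwise $\deg t^b\geq (k_1/2-1)(q-2)+1$. Now cover $G$ by $H_1=P_1\cup P_2$ and $H_2=P_1\cup P_3\cup\cdots\cup P_r$, \emph{overlapping on all of $P_1$}. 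Splitting $t^c=t^dt^e$ with $t^d\in K[E_{P_2}]$, $t^e\in K[E_{P_3\cup\cdots\cup P_r}]$, the factors $t^bt^d$ and $t^bt^e$ both contain $t^b$. If neither exceeds the corresponding regularity, then
\[
\deg t^a=\deg(t^bt^d)+\deg(t^bt^e)-\deg t^b\leq (k_1/2+\cdots+k_r/2-1)(q-2)-1,
\]
a contradiction. The lower bound on $\deg t^b$ obtained in the first step is exactly what makes this subtraction work; this is the piece your outline lacks. Note that for $r=3$ both $H_1$ and $H_2$ are even cycles, so no further induction is needed there.
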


\begin{proof}
According to Proposition~\ref{prop: computing reg by reducing to Artinian quotient}, it suffices to show
that any monomial $t^a\in K[E_G]$ of degree \mbox{$(k_1/2+\cdots +k_r/2 -1)(q-2) +1$} belongs to $(I(X),t_1)$.
We may assume $t_1$ does not divide $t^a$. We will argue by induction on $r$. For $r=2$, as observed earlier, the result holds true.
Assume now that $r\geq 3$. Let $H$ be the subgraph of $G$ given by $\set{t_1}\cup P_2\cup \cdots \cup P_r$
and $Y$ be the projective toric subset parameterized by $G$.
By induction and \cite[Theorem~7.4]{NeVPVi14},
\[
\reg K[E_H]/I(Y) = (k_2/2+\cdots + k_r/2)(q-2).
\]
Set $t^a = t^bt^c$, with $t^b\in K[E_{P_1}]$ and $t^c\in K[E_{H}]$. If \mbox{$\deg(t^c)\geq (k_2/2+\cdots + k_r/2)(q-2)+1$},
then, by Proposition~\ref{prop: computing reg by reducing to Artinian quotient}, $t^c\in (I(Y),t_1)\subset (I(X),t_1)$, which implies that
$t^a \in (I(X),t_1)$.
Assume that \mbox{$\deg(t^c)\leq (k_2/2+\cdots + k_r/2)(q-2)$}. Then $\deg(t^b)\geq (k_1/2-1)(q-2)+1$.
Consider now the subgraphs of $G$ given by
$$
H_1=P_1\cup P_2 \quad \text{and}\quad H_2=P_1\cup P_3\cup \cdots \cup P_r
$$
and denote by $X_1$ and $X_2$, respectively, the projective toric subsets parameterized by them.
Set $t^c=t^dt^e$ with $t^bt^d\in K[E_{H_1}]$ and $t^bt^e\in K[E_{H_2}]$.
By the induction hypothesis,
$$
\renewcommand{\arraystretch}{1.3}
\begin{array}{c}
\reg K[E_{H_1}]/I(X_1) = (k_1/2+k_2/2-1)(q-2)\quad \text{and} \\
\reg K[E_{H_2}]/I(X_2) = (k_1/2+k_3/2+\cdots + k_r/2-1)(q-2).
\end{array}
$$
Hence, if $\deg(t^bt^d)\geq (k_1/2+k_2/2-1)(q-2) +1$, we get $t^bt^d \in (I(X_1),t_1)\subset (I(X),t_1)$ which
implies that $t^a\in (I(X),t_1)$. Similarly, if $\deg(t^bt^e)\geq (k_1/2+k_3/2+\cdots + k_r/2-1)(q-2) +1$.
Suppose that
$$
\renewcommand{\arraystretch}{1.3}
\begin{array}{c}
\deg(t^bt^d)\leq (k_1/2+k_2/2-1)(q-2)\quad \text{and} \\
\deg(t^bt^e)\leq (k_1/2+k_3/2+\cdots + k_r/2-1)(q-2).
\end{array}
$$
Since $\deg(t^a)=\deg(t^bt^d)+\deg(t^bt^e)-\deg(t^b)$,
we deduce that
$$
\renewcommand{\arraystretch}{1.3}
\begin{array}{c}
\deg(t^a) \leq (k_1/2+\cdots+k_r/2-1)(q-2)-1,
\end{array}
$$
which is a contradiction.
\end{proof}

\begin{theorem}\label{thm: opp inequality in the case k_i odd}
If $k_i$ are odd, then $\reg K[E_G]/I(X) \leq (\lfloor k_1/2\rfloor +\cdots+\lfloor k_r/2\rfloor)(q-2)$.
\end{theorem}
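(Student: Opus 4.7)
The plan is to apply Proposition~\ref{prop: computing reg by reducing to Artinian quotient} with $g = t_1 = t_{\sigma_1+1}$, which reduces the claim to showing that every monomial $t^a \in K[E_G]$ of degree $\rho(q-2)+1$ belongs to $(I(X),t_1)$, where $\rho = \lfloor k_1/2 \rfloor + \cdots + \lfloor k_r/2 \rfloor$. By Proposition~\ref{prop: membership of a zero dimensional reduction}(ii), this is equivalent to producing a monomial $t^b$ with $t_1 \mid t^b$, $\deg t^b = \deg t^a$, and $t^a - t^b \in I(X)$; by the vertex-congruence criterion recalled at the start of Section~\ref{sec: prelim}, the latter is a purely combinatorial condition.

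My first step is to derive a workable form of the congruences defining $I(X)$ on this graph. Telescoping the interior-vertex congruences $a_{\sigma_i+j} + a_{\sigma_i+j+1} \equiv b_{\sigma_i+j} + b_{\sigma_i+j+1} \pmod{q-1}$ along each path $P_i$ shows that the differences $(a-b)_{\sigma_i+j}$ alternate in sign, so there exists $\Delta_i \pmod{q-1}$ with $b_{\sigma_i+j} \equiv a_{\sigma_i+j} - (-1)^{j-1}\Delta_i \pmod{q-1}$ for every $j$ along $P_i$. Because every $k_i$ is odd, both boundary congruences (at $v_1$ and at the opposite endpoint) collapse to the single linear condition $\Delta_1 + \cdots + \Delta_r \equiv 0 \pmod{q-1}$.

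Next I would exploit Lemma~\ref{lemma: swaping exponents}: every $P_i$ is an ear of $G$, so swapping two same-parity edges of $P_i$ (fixing $t_1$ when the swap occurs within $P_1$) gives an automorphism of $K[E_G]$ preserving both $I(X)$ and $(I(X),t_1)$. This symmetry lets me reduce $t^a$ to a canonical form in which the exponent mass is concentrated on a small set of representative edges per path, drastically simplifying the bookkeeping. Producing $t^b$ then amounts to choosing $(\Delta_1,\ldots,\Delta_r) \in \{0,\ldots,q-2\}^r$ with $\Delta_1 + \cdots + \Delta_r \equiv 0 \pmod{q-1}$ in such a way that the minimal non-negative lifts $b_j^{\min}$ of the forced residue classes satisfy $\sum_j b_j^{\min} \le \deg t^a$ and $b_1^{\min} \ge 1$ (the degenerate sub-case $b_1^{\min}=0$ is handled by bumping $b_1$ to $q-1$). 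Any residual deficit is forced to be a multiple of $q-1$ and is absorbed by inflating individual $b_j$.

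The main obstacle is this last combinatorial step. An averaging argument shows the mean of $\sum_j b_j^{\min}$ over all admissible tuples $(\Delta_i)$ is $(k_1 + \cdots + k_r)(q-2)/2$, which exceeds the target $\rho(q-2)$ by exactly $r(q-2)/2$; one must locate an extremal tuple $(\Delta_i)$ beating this mean by the right margin. I expect to carry this out by induction on $r$: the base case $r=2$ is supplied by the even-cycle entry of Table~\ref{table: values of reg} (since a parallel composition of two odd paths is an even cycle of length $k_1+k_2$), and the inductive step uses the canonical form to control, path by path, the contribution of each $\Delta_i$ to $\sum_j b_j^{\min}$, perhaps in tandem with Proposition~\ref{prop: regularity inequality over subgraphs} applied to a well-chosen pair of subgraphs sharing the edge $t_1$.
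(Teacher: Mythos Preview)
Your congruence reformulation is correct: telescoping along each $P_i$ does give a single residue $\Delta_i$ with $b_{\sigma_i+j}\equiv a_{\sigma_i+j}-(-1)^{j-1}\Delta_i\pmod{q-1}$, and since each $k_i$ is odd both endpoint congruences collapse to $\Delta_1+\cdots+\Delta_r\equiv 0$. The use of Lemma~\ref{lemma: swaping exponents} is also legitimate.

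The gap is exactly where you place it, and neither of your suggested remedies closes it. Applying Proposition~\ref{prop: regularity inequality over subgraphs} to a cover such as $H_1=P_1\cup P_2$, $H_2=P_1\cup P_3\cup\cdots\cup P_r$ yields only the bound $\bigl(2\lfloor k_1/2\rfloor+\lfloor k_2/2\rfloor+\cdots+\lfloor k_r/2\rfloor\bigr)(q-2)$, too large by $\lfloor k_1/2\rfloor(q-2)$. One might hope to recover this loss by the refined degree-splitting of Proposition~\ref{prop: opp inequality in the case k_i even}; but if you run that argument in the odd setting (with $H=\{t_1\}\cup P_2\cup\cdots\cup P_r$, whose regularity by block additivity and induction is $(\lfloor k_2/2\rfloor+\cdots+\lfloor k_r/2\rfloor+1)(q-2)$), the final chain of inequalities yields only $\deg t^a\le(\rho+1)(q-2)-1$, which contradicts $\deg t^a=\rho(q-2)+1$ only for $q\le 3$. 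The ``$-1$'' in the even-case regularity formula is precisely what makes that argument close, and it is absent here. A bare induction on $r$ fares no better: after peeling off the part of $t^a$ supported on $P_1\cup\cdots\cup P_{r-1}$, the residual monomial on $P_r$ need only have degree $\ge\lfloor k_r/2\rfloor(q-2)+1$, far below $\reg K[E_{P_r}]/I(\TT^{k_r-1})=(k_r-1)(q-2)$, and $t_1\notin E_{P_r}$ anyway.

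The paper's proof differs in three essential ways. It takes $g=g_1\cdots g_r$ (of degree $\rho$) rather than $g=t_1$; it inducts on the total length $k_1+\cdots+k_r$ rather than on $r$; and---this is the mechanism your sketch is missing---it uses the relation $f_ig_j-f_jg_i\in I(X)$ of~(\ref{eq: relation between fi and gj}) to transfer mass between paths. Concretely: after choosing $i$ (say $i=1$) with $\deg h_1\le\lfloor k_1/2\rfloor(q-1)$, sorting via Lemma~\ref{lemma: swaping exponents}, and applying the inductive hypothesis to $P_2\cup\cdots\cup P_r$, one obtains $t^a-h_1g_2\cdots g_rt^b\in I(X)$. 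If $g_1\mid h_1$ one is done; if instead $f_1\mid h_1$, one trades $f_1g_2$ for $f_2g_1$ to manufacture the missing factor $g_1$. The remaining case $a_1=a_2=0$ is handled by viewing $t^a$ as a monomial on the graph with $P_1$ shortened by two edges, invoking the length induction there, and then reinserting $t_1,t_2$ with exponents $q-1-k$ and $k$ chosen so that the endpoint congruences match.
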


\begin{proof}
We will use induction on $k_1+\dots+k_r$.
In the base case, $r=2$ and $k_1=k_2=1$. As we mentioned earlier,
$\reg K[E_G]/I(X) = 0$.

Assume that $k_1+\cdots+k_r>3$ and, as induction hypothesis, that the statement
of the theorem holds for any $k'_1,\dots,k'_{r'}$ and $r'\geq 2$ such that
$k'_1+\cdots +k'_{r'}<k_1+\cdots+k_r$. If $r=2$, then, as observed in the
beginning of this section, $G$ is an even cycle of length $k_1+k_2$ and accordingly
$$\reg K[E_G]/I(X) = ((k_1+k_2)/2-1)(q-2)=(\lfloor k_1/2 \rfloor + \lfloor k_2/2 \rfloor)(q-2).$$
Hence, we may assume $r\geq 3$. If, for some $i$, $k_i=1$, let $G'$ be the subgraph of $G$ given as the parallel
composition of all $P_1,\dots,P_r$ but $P_i$. We note that $G'$ is a connected spanning subgraph of
$G$ and hence, if $X'$ is the projective toric subset parameterized by $G'$,
by the induction hypothesis, since $\lfloor k_i/2 \rfloor=0$, we get
$$\reg K[E_G]/I(X) \leq \reg K[E_{G'}]/I(X') \leq (\lfloor k_1/2\rfloor +\cdots+\lfloor k_r/2\rfloor)(q-2).$$
Thus, we may assume $k_i\geq 3$, for all \mbox{$i=1,\dots,r$.}
According to Proposition~\ref{prop: computing reg by reducing to Artinian quotient}, to show that
$\reg K[E_G]/I(X)\leq (\lfloor k_1/2\rfloor +\cdots+\lfloor k_r/2\rfloor)(q-2)$, it suffices to show that
any monomial $t^a\in K[E_G]$ of degree
\begin{equation}\label{eq: degree of t^a}
(\lfloor k_1/2\rfloor +\cdots+\lfloor k_r/2\rfloor)(q-2)+\lfloor k_1/2 \rfloor
+\cdots + \lfloor k_r/2 \rfloor
\end{equation}
belongs to the ideal $(I(X),g)\subset K[E_G]$,
where $g=g_1\cdots g_r$ and $g_i$ were defined in (\ref{eq: def of f_i and g_i}).
Let $t^a$ be one such monomial and write it as
the product of monomials, $h_1\cdots h_r$, where $h_i\in K[E_{P_i}]$.
By (\ref{eq: degree of t^a}), we have
$\deg(h_i)\leq \lfloor k_i/2 \rfloor (q-1)$, for some $i\in\set{1,\dots,r}$.
Without loss of generality we assume $i=1$. In particular,
\begin{equation}\label{eq: inequality4}
\deg(h_2\cdots h_r)\geq (\lfloor k_2/2\rfloor+\cdots +\lfloor k_r/2\rfloor)(q-2) + \lfloor k_2/2 \rfloor +\cdots + \lfloor k_r/2 \rfloor.
\end{equation}
Since $g$ is invariant under the swapping of variables corresponding to edges of $P_1$
in a same parity position,
using Lemma~\ref{lemma: swaping exponents}, we may assume that
\begin{equation}\label{eq: inequality5}
a_1\leq a_3 \leq \cdots \leq a_{2\lceil k_1/2\rceil -1}\text{ and }
a_2\leq a_4 \leq \cdots \leq a_{2\lfloor k_1/2\rfloor}.
\end{equation}
Let $H$ be the subgraph of $G$ given by $P_2\cup\cdots \cup P_r$ and denote by $Y$ the projective
toric subset parameterized by it. By induction,
$\reg K[E_H]/I(Y)=(\lfloor k_2/2 \rfloor +\cdots+\lfloor k_r/2 \rfloor)(q-2)$.
Then, by (\ref{eq: inequality4}),
Proposition~\ref{prop: computing reg by reducing to Artinian quotient}
and Proposition~\ref{prop: membership of a zero dimensional reduction},
there exists a monomial $t^b\in K[E_H]$, for some $b\in \NN^s$ supported
on the edges of $H$, such that $h_2\cdots h_r-g_2\cdots g_rt^b \in I(Y)\subset I(X)$
and hence
\begin{equation}\label{eq: inequality6}
t^a-h_1g_2\cdots g_rt^b \in I(X).
\end{equation}
If $a_2\not = 0$, then from (\ref{eq: inequality5}) we deduce that
$g_1$ divides $h_1$ and we are done.
If $a_1\not = 0$, then there exists $c\in \NN^s$ such that $h_1=f_1t^c$. Accordingly,
$h_1g_2\cdots g_rt^b = f_1g_2\cdots g_r t^{b+c}$. Since $f_1g_2-f_2g_1 \in I(X)$, we deduce that
$f_1g_2\cdots g_rt^{b+c} - f_2g_1g_3\cdots g_r t^{b+c} \in I(X)$, which, together with (\ref{eq: inequality6}),
implies that
\begin{equation}\label{eq: membership1}
t^a - f_2g_1g_3\cdots g_r t^{b+c} \in I(X).
\end{equation}
Consider $a'\in \NN^s$ such that $t^{a'}=f_2g_1g_3\cdots g_rt^{b+c}$. Since $h_1=f_1t^c$ and
the monomials $f_2, g_3,\dots, g_r, t^b$ are supported away from the edges of $P_1$, we see that, if $1\leq i\leq k_1$,
$a'_i = a_i-1$, when $i$ is odd, and $a'_i=a_i+1$, when $i$ is even. In particular, $a'_2\not =0$
and, in the corresponding decomposition $t^{a'}=h'_1\cdots h'_r$ with
monomials $h'_i \in K[E_{P_i}]$, we get $\deg(h'_1) = \deg(h_1)-1$.
Repeating the previous argument, we deduce that $t^{a'}\in (I(X),g)$,
which, using (\ref{eq: membership1}) implies that $t^{a}\in (I(X),g)$.
\smallskip

We are left with the case of $a_1=a_2=0$. We regard $t^a$ as a monomial in
$K[E_{G'}]$, where $G'$ is the graph obtained as the parallel composition of
$P_1\setminus \set{t_1,t_2}$, $P_2,\dots, P_r$.
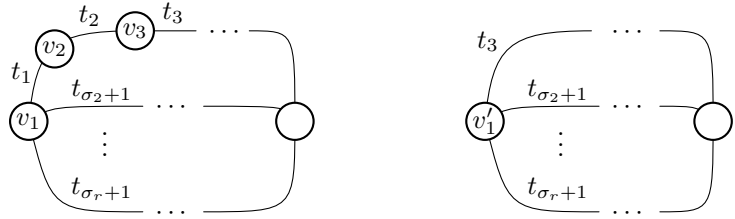
\begin{figure}[h]
\begin{center}
\begin{tikzpicture}[line cap=round,line join=round, scale=2]

\draw (0,-.6)..controls (0.05,-.1) and (0.15,0)..(0.7,0);
\draw (0.804,0)--(1.1,0);
\draw (0,-.6).. controls (0.15,-.5)..(.75,-.5);
\draw (0.95,-.505) node{$\cdots$};

\draw (1.45,0)..controls (1.75,0)..(1.75,-.6);
\draw (1.3,-.006) node{$\cdots$};

\draw (0,-.6)..controls (0.15,-1.2)..(.75,-1.2);
\draw (0.95,-1.206) node{$\cdots$};

\draw [fill=white,thick] (0,-.6) circle (3.5pt);
\draw (0,-.6) node {$v_1$};
\draw (-.045,-.28) node {$t_{1}$};

\draw [fill=white,thick] (0.17,-.12) circle (3.5pt);
\draw (0.17,-.12) node {$v_2$};
\draw (0.4,.1) node {$t_{2}$};

\draw [fill=white,thick] (0.7,0) circle (3.5pt);
\draw (0.7,0) node {$v_3$};
\draw (0.95,.12) node {$t_{3}$};

\draw (1.15,-.5)..controls (1.7,-.5)..(1.75,-.6);
\draw (.48,.-.4) node {$t_{\sigma_2+1}$};

\draw (1.15,-1.2)..controls (1.75,-1.2)..(1.75,-.6);
\draw (.48,.-1.05) node {$t_{\sigma_r+1}$};
\draw (0.5,-0.7) node {$\vdots$};

\draw [fill=white,thick] (1.75,-.6) circle (3.5pt);

%%%%
%% Graph G'
%%%%

\draw (3,-.6)..controls (3.05,-.1) and (3.15,0)..(3.7,0);
\draw (3,-.6)..controls (3.15,-.5)..(3.75,-.5);
\draw (3.95,-.505) node{$\cdots$};

\draw (4.15,0)..controls (4.5,0)..(4.5,-.6);
\draw (3.95,-.006) node{$\cdots$};

\draw (3,-.6)..controls (3.15,-1.2)..(3.75,-1.2);
\draw (3.95,-1.206) node{$\cdots$};

\draw [fill=white,thick] (3,-.6) circle (3.5pt);
\draw (3,-.6) node {$v'_1$};
\draw (3.02,-.07) node {$t_{3}$};

\draw (4.15,-.5)..controls (4.4,-.5)..(4.5,-.6);
\draw (3.48,.-.4) node {$t_{\sigma_2+1}$};

\draw (4.15,-1.2)..controls (4.5,-1.2)..(4.5,-.6);
\draw (3.48,.-1.05) node {$t_{\sigma_r+1}$};
\draw (3.5,-0.7) node {$\vdots$};

\draw [fill=white,thick] (4.5,-.6) circle (3.5pt);

\end{tikzpicture}
\end{center}
\caption{$G$ (left) and $G'$ (right).}
\label{fig: graphs G and G'}
\end{figure}

\noindent
Let $X'$ be the projective toric subset
parameterized by $G'$. By the induction hypothesis
$$
\reg K[E_{G'}]/I(X') = (\lfloor k_1/2 \rfloor+\cdots+\lfloor k_r/2 \rfloor -1)(q-2).
$$
Hence, by Proposition~\ref{prop: computing reg by reducing to Artinian quotient}
and Proposition~\ref{prop: membership of a zero dimensional reduction}, there exists
$t^d\in K[E_{G'}]$, where $d\in \NN^s$ is supported on the edges of $G'$, such that
\begin{equation}
\label{eq: binomial in G'}
t^a-g'_1g_2\cdots g_rt_3^{q-1}t^d \in I(X'),
\end{equation}
where $g'_1=g_1/t_2\in K[E_{G'}]$.
We claim there exists $k\in \set{1,2,\dots,q-1}$ such that
\begin{equation}
\label{eq: binomial in G}
t^a -t_1^{\widehat{k}}t_2^k g'_1g_2\cdots g_rt^d \in I(X)
\end{equation}
with $\widehat{k}=q-1-k$. We define $k$ using the congruence at vertex $v'_1$ of $G'$
(see Figure~\ref{fig: graphs G and G'}) which, according to \cite[Lemma~2.3]{NeVP14},
is satisfied for the binomial in (\ref{eq: binomial in G'}). This congruence is:
\begin{equation}\nonumber
\renewcommand{\arraystretch}{1.3}
\begin{array}{c}
a_3+a_{\sigma_2+1}+\cdots + a_{\sigma_r+1}\equiv q-1 + d_3 + d_{\sigma_2+1} +\cdots + d_{\sigma_r+1} \pmod{q-1} \\
\iff a_3-d_3 \equiv d_{\sigma_2+1}+\cdots +d_{\sigma_r+1}  - (a_{\sigma_2+1} +\cdots + a_{\sigma_r+1} )  \pmod{q-1}.
\end{array}
\end{equation}
Let $k\in \set{1,2,\dots,q-1}$ to be such that:
\begin{equation}\label{eq: def of k}
k \equiv a_3-d_3 \equiv d_{\sigma_2+1}+\cdots +d_{\sigma_r+1}  - (a_{\sigma_2+1} +\cdots +  a_{\sigma_r+1} )  \pmod{q-1}.
\end{equation}
Let us now show that (\ref{eq: binomial in G}) holds. Since $t^a -t_1^{\widehat{k}}t_2^k g'_1g_2\cdots g_rt^d$ is homogeneous,
it will suffice to check the congruences at each vertex of $G$. Since for the binomial in (\ref{eq: binomial in G'}), from which
we obtain this binomial, the congruences are satisfied at all vertices of $G'$,
it will be enough to check the congruences for the vertices
$v_1,v_2$ and $v_3$. At $v_1$, we have:
$$
a_{\sigma_2+1}+\cdots + a_{\sigma_r+1}\equiv (q-1)-k + d_{\sigma_2+1}+\cdots + d_{\sigma_r+1} \pmod{q-1},
$$
at $v_2$,
$0\equiv (q-1)-k+k \pmod{q-1}$
and at $v_3$,
$a_3 \equiv k + d_{3} \pmod{q-1}$,
all of which hold, by virtue of (\ref{eq: def of k}). This completes the proof of the theorem.
\end{proof}

The proof of Theorem~\ref{th: main theorem} follows from Lemma~\ref{lemma: easy inequality},
Proposition~\ref{prop: opp inequality in the case k_i even} and Theorem~\ref{thm: opp inequality in the case k_i odd}.
\smallskip

We now turn to the proof of Theorem~\ref{th: main theorem 2}. In this case, $G$ is the parallel composition of paths $P_1,\dots,P_r$ the lengths of which have mixed parity. 
We assume, without loss of generality, that $P_1,\dots,P_l$ have odd lengths and $P_{l+1},\dots,P_r$ have even lengths, for some $1\leq l <r$. 
We will keep the notation for the edges of $G$ as in the beginning of this section
and recall that (as in the statement of Theorem~\ref{th: main theorem 2}) we will be denoting by $H_1$ the parallel composition of the paths of odd lengths, 
by $H_2$ the parallel composition of the paths of even lengths and by $X_1,X_2$, 
respectively, the projective toric subsets they parameterize. 

\begin{lemma}\label{lemma: easy inequality in the non-bipartite case}
$\reg K[E_G]/I(X) \leq  \reg K[E_{H_1}]/I(X_1) + \reg K[E_{H_2}]/I(X_2) + (q-2)$.
\end{lemma}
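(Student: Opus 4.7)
The plan is to combine Proposition~\ref{prop: regularity inequality over subgraphs} with Proposition~\ref{prop: aditivity on leaves}. Although $E_G = E_{H_1}\cup E_{H_2}$, the two subgraphs share no edges, only the two common endpoints of the paths, so Proposition~\ref{prop: regularity inequality over subgraphs} does not apply directly. The natural fix is to enlarge $H_1$ by a single edge of $H_2$, chosen so that it becomes a pendant of the new subgraph; this restores the required overlap at the cost of a $(q-2)$ summand that is controlled exactly by Proposition~\ref{prop: aditivity on leaves}.

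Concretely, let $t_j = t_{\sigma_{l+1}+1}$ be the first edge of $P_{l+1}$, incident to the common starting vertex of all the paths. Since $P_{l+1}$ has even length, hence length at least $2$, its other endpoint $w$ is an interior vertex of $P_{l+1}$ and in particular $w\notin V_{H_1}$. Define $H_1' = H_1 \cup \{t_j\}$ and let $X_1'$ be the projective toric subset parameterized by $H_1'$. Then $w$ has degree $1$ in $H_1'$ and $H_1' - w = H_1$, so Proposition~\ref{prop: aditivity on leaves} gives
\[
\reg K[E_{H_1'}]/I(X_1') = \reg K[E_{H_1}]/I(X_1) + (q-2).
\]

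With $H_1'$ in place of $H_1$, one still has $E_G = E_{H_1'}\cup E_{H_2}$, and now $E_{H_1'}\cap E_{H_2} = \{t_j\}\neq \emptyset$. Proposition~\ref{prop: regularity inequality over subgraphs} then yields
\[
\reg K[E_G]/I(X) \leq \reg K[E_{H_1'}]/I(X_1') + \reg K[E_{H_2}]/I(X_2),
\]
and substituting the previous identity produces the desired upper bound. No real obstacle arises in this argument: the only point requiring care is the choice of $t_j$ so as to guarantee that it is indeed pendant in $H_1'$, which is automatic once we use the first edge of any even-length path, and the rest is delegated to the two general propositions already established in Section~\ref{sec: prelim}.
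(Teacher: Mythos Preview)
Your proof is correct and follows essentially the same strategy as the paper: enlarge one of $H_1,H_2$ by a single edge of the other so that Proposition~\ref{prop: regularity inequality over subgraphs} applies, then account for the extra edge via Proposition~\ref{prop: aditivity on leaves}. The only difference is that the paper enlarges $H_2$ by the edge $t_1\in E_{H_1}$ to form $H'_2$, whereas you enlarge $H_1$ by the first edge of $P_{l+1}$; your choice has the small advantage that the pendant hypothesis of Proposition~\ref{prop: aditivity on leaves} is automatic (even paths have length $\geq 2$), while the paper's version tacitly needs the endpoint of $t_1$ away from the common vertex to lie outside $V_{H_2}$, i.e.\ $k_1>1$.
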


\begin{proof}
Consider the cover of $G$ given by $H_1$ and $H'_2$ where 
$H'_2$ is given by $\set{t_1}\cup H_2$. Then \mbox{$E_{H_1}\cap E_{H'_2} \not = \emptyset$} and therefore by 
Proposition~\ref{prop: regularity inequality over subgraphs},
\begin{equation}\label{eq: proof of easy inequality in the non-bipartite case}
\reg K[E_G]/I(X) \leq \reg K[E_{H_1}]/I(X_1) + \reg K[E_{H'_2}]/I(X'_2),
\end{equation}
where $X'_2$ is the projective toric subset parameterized by $H'_2$.
By Proposition~\ref{prop: aditivity on leaves}, we know that $\reg K[E_{H'_2}]/I(X'_2)= \reg K[E_{H_2}]/I(X_2) + (q-2)$. Combining
this with (\ref{eq: proof of easy inequality in the non-bipartite case}) completes the proof of the lemma.
\end{proof}

\begin{theorem}\label{thm: harder inequality in the non-bipartite case}
$\reg K[E_G]/I(X) \geq  \reg K[E_{H_1}]/I(X_1) + \reg K[E_{H_2}]/I(X_2) + (q-2)$.
\end{theorem}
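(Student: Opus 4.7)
The plan is to apply Proposition~\ref{prop: computing reg by reducing to Artinian quotient}: it suffices to produce a monomial $t^a \in K[E_G]$ of degree $D := \reg K[E_{H_1}]/I(X_1) + \reg K[E_{H_2}]/I(X_2) + (q-2)$ and an edge $t_*$ with $t^a \notin (I(X), t_*)$. By Proposition~\ref{prop: membership of a zero dimensional reduction}, this reduces to showing that no monomial $t^b$ satisfies $t^a - t_* t^b \in I(X)$ homogeneously, a combinatorial system of vertex-congruence conditions modulo $q-1$. I split the argument according to the values of $l$ (the number of odd paths) and $r-l$ (the number of even paths).

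When $l \geq 2$ and $r-l \geq 2$, Theorem~\ref{th: main theorem} gives $D = \bigl(\sum_{i=1}^l \lfloor k_i/2\rfloor + \sum_{j=l+1}^r k_j/2\bigr)(q-2)$, and the inequality follows from Proposition~\ref{prop: lower bound using independent set} applied to $S = S_1 \cup S_2$, where $S_1$ collects the interior vertices of the odd paths lying on the bipartite class of $u$ in $H_1$, and $S_2$ collects the interior vertices of the even paths lying on the class opposite to $\{u,v\}$ in $H_2$. One checks that $S$ is independent, $|S|(q-2) = D$, and $G \setminus S$ contains an edge (e.g.\ from $u$ to the first interior vertex of any odd path $P_i$ with $k_i\geq 3$, or a multi-edge $uv$ if some odd path has length~$1$). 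The same independent set also covers the degenerate sub-case $l = 1, k_1 = 1$; and the case $l = r-l = 1$ amounts to an odd cycle with $X = \TT^{s-1}$, so $\reg K[E_G]/I(X) = (s-1)(q-2) = D$ is immediate.

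The remaining case (up to symmetry) is $l = 1$ with $k_1 \geq 3$. Here I take
$$
t^a = \prod_{s=1}^{k_1-1} t_{\sigma_1+s}^{q-2} \cdot \prod_{j=2}^{r} f_j^{q-2}, \qquad t_* = t_{\sigma_1+k_1},
$$
with $f_j$ as in~(\ref{eq: def of f_i and g_i}); a direct count gives $\deg t^a = D$. Supposing $t^a - t_* t^b \in I(X)$ homogeneous for some monomial $t^b$, the vertex congruences at the interior vertices of $P_1$ force $b_{\sigma_1+s} + b_{\sigma_1+s+1} \equiv -2 \pmod{q-1}$ and those on each $P_j$ ($j\geq 2$) force $b_{\sigma_j+s} + b_{\sigma_j+s+1} \equiv -1 \pmod{q-1}$, so the $b$-exponents alternate between two residues along each path. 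Combining with the congruences at $u$ and $v$ yields $2\alpha \equiv -2 \pmod{q-1}$ for the common residue $\alpha$ of $b_{\sigma_1+s}$ at odd $s$: thus $\alpha \in \{(q-3)/2, q-2\}$ when $q$ is odd and $\alpha = q-2$ when $q$ is even. A direct computation of the minimum admissible $\deg t^b$ shows that the gap $\deg t^a - 1 - (\text{min})$ equals $-(q-1)$ when $\alpha = q-2$ and $(q-1)(k_1-2)/2$ when $\alpha = (q-3)/2$; neither is a nonnegative multiple of $q-1$ when $k_1\geq 3$ is odd, so no valid $t^b$ exists. The symmetric case $r-l = 1, k_r \geq 2$ is handled by the analogous monomial $f_1^{q-2}\cdot \prod_{i=2}^l g_i^{q-2}\cdot \prod_{s=1}^{k_r-1} t_{\sigma_r+s}^{q-2}$, witnessing the torus regularity of the unique even path together with $g_i$-factors on the odd paths.

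The main obstacle is the congruence/minimum-degree bookkeeping in the single-path regime: Proposition~\ref{prop: lower bound using independent set} systematically falls short of $D$ by $(q-2)$ whenever one of $H_1, H_2$ consists of a single path (of odd length $\geq 3$ or any even length), because that single-path factor contributes the full torus regularity $(k-1)(q-2)$ rather than a bipartite independent-set bound, and one must therefore pass to a direct residue-class argument.
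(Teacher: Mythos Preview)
Your approach is correct but takes a genuinely different route from the paper's. Both proofs agree on the odd-cycle case $l = r-l = 1$ and on the sub-case $l=1,\,k_1=1$ (where both invoke Proposition~\ref{prop: lower bound using independent set}). For the remaining cases the paper systematically uses vertex identification (Proposition~\ref{prop: general lower bound in contraction}): it collapses every other vertex along the odd (respectively even) paths onto a terminal vertex, reducing $G$ to an odd cycle with pendant double edges, whose regularity is read off from Proposition~\ref{prop: aditivity on leaves} and the torus formula. This is short and avoids any congruence bookkeeping.

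Your proof, by contrast, stays inside $G$. In the generic case $l\geq 2,\,r-l\geq 2$ you apply Proposition~\ref{prop: lower bound using independent set} directly to the set $S_1\cup S_2$, which is in fact simpler than the paper's contraction argument for that case. In the single-path cases $l=1,\,k_1\geq 3$ and $r-l=1$ you construct an explicit monomial and rule out any witness $t^b$ by residue-and-degree accounting. This works, but one caution: the two single-path cases are not literally symmetric. In the first, the quadratic constraint $2\alpha\equiv -2$ lands on the lone odd path and the minimum contribution of each even $P_j$ is automatically $(k_j/2)(q-2)$, independent of its free parameter $\gamma_j$. In the second, the constraint $2\gamma\equiv -2$ lands on the lone even path, but now the minimum contribution of each odd $P_i$ equals $\lfloor k_i/2\rfloor(q-2)+[\alpha_i]$ and \emph{does} depend on its residue; one must minimise $\sum_i[\alpha_i]$ subject to the global constraint $\sum_i\alpha_i\equiv -2-\gamma$. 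The computation still closes (the gap comes out to $-(q-1)$ for $\gamma=q-2$ and $(q-1)(k_r-1)/2$ for $\gamma=(q-3)/2$, neither a nonnegative multiple of $q-1$ since $k_r$ is even), but your phrase ``handled by the analogous monomial'' understates what actually needs to be checked.
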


\begin{proof}
We divide the proof into cases. 
\smallskip

\noindent
\emph{The case $l=1$ and $r=2$.} In this case $G$ is a cycle 
of (odd) length $k_1+k_2$. Accordingly, $X$ coincides with $\TT^{k_1+k_2-1}$ and, by the formula in Table~\ref{table: values of reg},
\mbox{$\reg K[E_G]/I(X) = (k_1+k_2-1)(q-2)$.} On the other hand $H_1$ and $H_2$ are paths of lengths $k_1$ and $k_2$ and the projective 
toric subsets they parameterized are the tori $\TT^{k_1-1}$ and $\TT^{k_2-1}$ so that, again by the same formula,
$\reg K[E_{H_1}]/I(X_1)=(k_1-1)(q-2)$ and $\reg K[E_{H_2}]/I(X_2) = (k_2-1)(q-2)$. We deduce that
$$
\reg K[E_G]/I(X) =  \reg K[E_{H_1}]/I(X_1) + \reg K[E_{H_2}]/I(X_2) + (q-2).
$$

\noindent
In the other cases, we will use vertex identifications and Proposition~\ref{prop: general lower bound in contraction}. 
For this purpose, let us denote the terminal vertices of the parallel composition yielding $G$ by $v$ and $w$.
\smallskip

\noindent
\emph{The case $l=1$, $k_1=1$ and $r-l>1$.}
Consider the vertices of $P_2,\dots,P_r$ at an odd number of edges away from $v$ (or $w$). They form 
an independent set of vertices of cardinality $k_2/2+\cdots +k_r/2$. Then, by Proposition~\ref{prop: lower bound using independent set},
we get 
\begin{equation}\label{eq: inequality when there is only one odd path}
\reg K[E_G]/I(X) \geq (k_2/2+\cdots+k_r/2)(q-2).
\end{equation}
Now, by Theorem~\ref{th: main theorem}, the right-hand of (\ref{eq: inequality when there is only one odd path}) is equal to 
$$0+\reg K[E_{H_2}]/I(X_2) + (q-2) = \reg K[E_{H_1}]/I(X_1) + \reg K[E_{H_2}]/I(X_2) + (q-2).$$ 
\smallskip

\noindent
\emph{The case $l=1$, $k_1>1$ and $r-l>1$.}
Let $G'$ be the graph obtained by identifying all the vertices in the paths $P_2,\dots,P_r$ 
at an even number of edges away from $v$ 
(or $w$) with the vertex $v$.
The resulting graph $G'$ consists of an odd cycle of length $k_1$ with a set of 
$k_2/2+\cdots +k_r/2$ double 
edges incident to one of its vertices (cf.~Figure~\ref{fig: First reduction}).
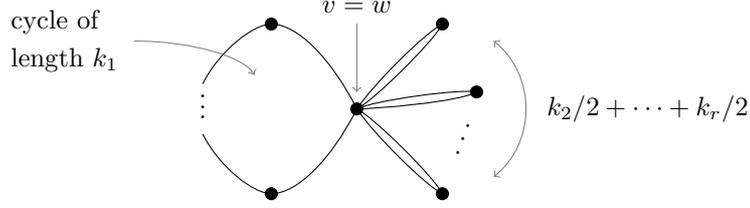
\begin{figure}[h]
\begin{center}
\begin{tikzpicture}[line cap=round,line join=round, scale=2.25]

\draw [fill=black] (0,0) circle (1pt);
\draw [black] (0,0).. controls (-.15,.3) and (-.35,.5) ..(-.5,.5);
\draw [fill=black] (-.5,.5) circle (1pt);

\draw [black] (0,0).. controls (-.15,-.3) and (-.35,-.5) ..(-.5,-.5);
\draw [fill=black] (-.5,-.5) circle (1pt);

\draw [black] (-.5,.5).. controls (-.6,.5) and (-.8,.35) ..(-.9,.15);
\draw [black] (-.5,-.5).. controls (-.6,-.5) and (-.8,-.35) ..(-.9,-.15);

\draw (-.9,.06) node {$\vdots$};

\draw [black] (0,0).. controls (.1,.15) and (.4,.45) ..(.5,.5);
\draw [black] (0,0).. controls (.1,.05) and (.45,.4) ..(.5,.5);
\draw [fill=black] (.5,.5) circle (1pt);

\draw [black] (0,0).. controls (.1,.05) and (.6,.12) ..(.7,.1);
\draw [black] (0,0).. controls (.1,0) and (.6,.04) ..(.7,.1);
\draw [fill=black] (.7,.1) circle (1pt);

\draw (.65,-.10) node {$\cdot$};
\draw (.62,-.18) node {$\cdot$};
\draw (.59,-.26) node {$\cdot$};

\draw [black] (0,0).. controls (.1,-.15) and (.4,-.45) ..(.5,-.5);
\draw [black] (0,0).. controls (.1,-.05) and (.45,-.4) ..(.5,-.5);
\draw [fill=black] (.5,-.5) circle (1pt);

\draw[gray,->] (0,.5) -- (0,.1); 
\draw (0,.6) node {$v=w$};

\draw[gray,->] (-1.3,.4).. controls (-.95,.4) and (-.65,.3).. (-.6,.2); 
\draw (-1.7,.4) node {\parbox{1.45cm}{\begin{flushleft}cycle of length $k_1$\end{flushleft}}};

\draw[gray,<->] (.8,.4).. controls (1.05,.2) and (1.05,-.2).. (.8,-.4); 
\draw (1.7,0) node {$k_2/2+\cdots + k_r/2$};

%\draw (0,0) circle (25pt);

\end{tikzpicture}
\end{center}
\caption{$G'$, obtained by identifying every other vertex in $P_2,\dots,P_r$.}
\label{fig: First reduction}
\end{figure}
Let $X'$ the projective toric subset parameterized by $G'$. 
The regularity of $K[E_{G'}]/I(X')$ is the same as if in $G'$ all double 
edges were single edges. Hence by Proposition~\ref{prop: aditivity on leaves} and the formula for 
the odd cycle case we get:
$$
K[E_{G'}]/I(X') = (k_1-1)(q-2) + (k_2/2+\cdots + k_r/2)(q-2)
$$
which coincides with $\reg K[E_{H_1}]/I(X_1) + \reg K[E_{H_2}]/I(X_2) + (q-2)$. Since, 
by Proposition~\ref{prop: general lower bound in contraction}, $\reg K[E_G]/I(X)\geq \reg[E_{G'}]/I(X')$
we obtain the desired inequality.
\smallskip

\noindent
\emph{The case $l>1$ and $r-l=1$.} In this case we construct a graph $G'$ by identifying  
all vertices in $P_1,\dots,P_l$ at an even number of edges away from $v$ with the vertex $v$. 
\begin{figure}[h]
\begin{center}
\begin{tikzpicture}[line cap=round,line join=round, scale=2.25]

\draw[fill=black] (0,0) circle (1pt);
\draw (0,0)..controls (.1,.15	) and (.9,.15) .. (1,0);
\draw (.5,.05) node {$\vdots$};
\draw (0,0)..controls (.1,-.15) and (.9,-.15) .. (1,0);
\draw[fill=black] (1,0) circle (1pt);

\draw (1.1,.1) node {$w$};
\draw (-.075,-.125) node  {$v$};

\draw (0,0)..controls (.1,-.6) and (.9,-.6) .. (1,0);
\draw[fill=black] (.5,-.45) circle (1pt);
\draw[fill=white,white] (.15,-.3) circle (4pt);
\draw (.1,-.25) node {$\cdot$};
\draw (.16,-.3105) node {$\cdot$};
\draw (.22,-.37) node {$\cdot$};
\draw (.8,-.5) node {$P_r$};

\draw (0,0)..controls (-.15,.04) and (-.55,.04) .. (-.7,0);
\draw (0,0)..controls (-.15,-.04) and (-.55,-.04) .. (-.7,0);
\draw[fill=black] (-.7,0) circle (1pt);

\draw (0,0)..controls (-.15,.15) and (-.55,.33) .. (-.62,.35);
\draw (0,0)..controls (-.15,.04) and (-.55,.26) .. (-.62,.35);
\draw[fill=black] (-.62,.35) circle (1pt);

\draw (0,0)..controls (-.04,.1) and (-.04,.6) .. (0,.7);
\draw (0,0)..controls (.04,.1) and (.04,.6) .. (0,.7);
\draw[fill=black] (0,.7) circle (1pt);

\draw (-.43,.50) node {$\cdot$};
\draw (-.33,.55) node {$\cdot$};
\draw (-.23,.60) node {$\cdot$};

\draw[gray,<-] (.65,0).. controls (.5,.3) and (.65,.4).. (.85,.4); 
\draw (1.45,.4) node {\parbox{2.5cm}{\begin{flushleft}$l$ multiple edges\end{flushleft}}};

\draw[gray,<-] (.75,-.25).. controls (.8,-.4) and (1,-.4).. (1.35,-.3); 
\draw (1.9,-.3) node {\parbox{2cm}{\begin{flushleft}cycle of length $k_r+1$\end{flushleft}}};

\draw[gray,<->] (-.8,.175).. controls (-.83,.5) and (-.5,.83).. (-0.175,.8); 
\draw (-1.45,.7) node {$\lfloor k_1/2 \rfloor + \cdots + \lfloor k_{r-1}/2 \rfloor$};

%\draw (0,0) circle (25pt);

\end{tikzpicture}
\end{center}
\caption{$G'$, obtained by identifying with $v$ every other vertex in $P_1,\dots,P_{r-1}$.}
\label{fig: Second reduction}
\end{figure}
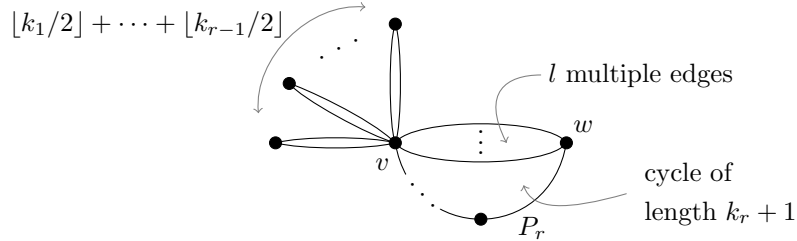
This graph consists of an odd cycle of length $k_r+1$ (given
by $P_r$ and (a choice of) an edge $\set{v,w}$) that has $l$ multiple edges between $v$ and $w$ and of a set of 
$\lfloor k_1/2 \rfloor + \cdots \lfloor k_{r-1}/2 \rfloor$ double edges incident at $v$ (cf.~Figure~\ref{fig: Second reduction}). 
Arguing as above, we get:
$$
\renewcommand{\arraystretch}{1.3}
\begin{array}{c}
\reg K[E_G]/I(X) \geq \reg K[E_{G'}]/I(X') = (\lfloor k_1/2 \rfloor + \cdots \lfloor k_{r-1}/2 \rfloor)(q-2) + k_r(q-2)\\
= \reg K[E_{H_1}]/I(X_1) + \reg K[E_{H_2}]/I(X_2) + (q-2).
\end{array}
$$

\noindent
\emph{The case $l>1$ and $r-l>1$.} As in the previous case, let $G'$ be the graph obtained by identifying  
the vertices in $P_1,\dots,P_l$ at an even number of edges away from $v$ with this vertex. 
We notice that the subgraph of $G'$ consisting of the paths $P_{l+1},\dots,P_r$ and (a choice of) an edge $\set{v,w}$ belongs
to the second case, above. Consequently,
$$
\renewcommand{\arraystretch}{1.3}
\begin{array}{c}
\reg K[E_G]/I(X) \geq (\lfloor k_1/2 \rfloor + \cdots + \lfloor k_l/2 \rfloor)(q-2) + (k_{l+1}/2+\cdots+k_r/2)(q-2)\\
= \reg K[E_{H_1}]/I(X_1) + \reg K[E_{H_2}]/I(X_2) + (q-2). \qedhere
\end{array}
$$
\end{proof}

The proof of Theorem~\ref{th: main theorem 2} is obtained by combining Lemma~\ref{lemma: easy inequality in the non-bipartite case} and
Theorem~\ref{thm: harder inequality in the non-bipartite case}. In Table~\ref{table: values of reg for a parallel composition} we
give explicit formulas for the regularity of $K[E_G]/I(X)$ when $G$ is a parallel composition of $r\geq 2$ paths of lengths
$k_1,\dots,k_r$, of which $k_1,\dots,k_l$ are odd and $k_{l+1},\dots,k_r$ are even. 
\begin{table}[h]
\renewcommand{\arraystretch}{1.5}
\begin{tabular}{l|l}
 & $\reg K[E_G]/I(X)$ \\
\hline
$l=0$ & $(k_1/2+\cdots + k_r/2 - 1)(q-2)$ \\
\hline
$l=r$ & $(\lfloor k_1/2 \rfloor + \cdots + \lfloor k_r/2 \rfloor)(q-2)$ \\
\hline
$l=1,r=2$ & $(k_1+k_2-1)(q-2)$ \\
\hline
$l=1, r>2$ & $(k_1+k_2/2+\cdots +k_r/2 -1)(q-2)$ \\
\hline
$l>1, r=l+1$ & $(\lfloor k_1/2 \rfloor + \cdots + \lfloor k_{r-1}/2 \rfloor + k_r)(q-2)$ \\
\hline
$l>1, r>l+1$ & $(\lfloor k_1/2 \rfloor + \cdots + \lfloor k_l/2 \rfloor + k_{l+1}/2+\cdots + k_r/2)(q-2)$ \\
\hline
\end{tabular}
\bigskip
\label{table: values of reg for a parallel composition}
\caption{Values of $\reg K[E_G]/I(X)$ when $G$ is a parallel composition of paths.}
\end{table}

\end{document}